\newtheorem{theorem}{Theorem}[section]
\newtheorem{proposition}[theorem]{Proposition}
\newtheorem{corollary}[theorem]{Corollary}
\theoremstyle{definition}
\newtheorem{definition}[theorem]{Definition}
\newtheorem{remark}[theorem]{Remark}
\newtheorem{conjecture/question}[theorem]{Conjecture/Question}
\newtheorem{remark/definition}[theorem]{Remark/Definition}
\newtheorem{terminology/notation}[theorem]{Terminology/Notation}
\def\GG{{\textbf G}}
\def\PP{{\textbf P}}
\def\FF{{\textbf F}}
\def\OO{\mathcal{O}}
\def\F{\mathcal{F}}
\def\P{\mathcal{P}}
\def\cC{\mathcal{C}}
\def\H{\mathcal{H}}
\def\hh{\mathfrak{H}_{\mathrm{scr}}}
\def\mm{\overline{\mathcal{M}}}
\DeclareFontFamily{OT1}{pzc}{}
\DeclareFontShape{OT1}{pzc}{m}{it}{<-> s * [1.10] pzcmi7t}{}
\DeclareMathAlphabet{\mathpzc}{OT1}{pzc}{m}{it}
\begin{document}
\title{The unirationality of the moduli space of $K3$ surfaces of genus 22}

\author[G. Farkas]{Gavril Farkas}

\address{Humboldt-Universit\"at zu Berlin, Institut f\"ur Mathematik,  Unter den Linden 6
\hfill \newline\texttt{}
 \indent 10099 Berlin, Germany} \email{{\tt farkas@math.hu-berlin.de}}
\thanks{}

\author[A. Verra]{Alessandro Verra}
\address{Universit\`a Roma Tre, Dipartimento di Matematica, Largo San Leonardo Murialdo \hfill
 \newline \indent 1-00146 Roma, Italy}
 \email{{\tt
verra@mat.uniroma3.it}}

\begin{abstract}
Using the connection discovered by Hassett between the Noether-Lefschetz moduli space $\cC_{42}$ of special cubic fourfolds of discriminant $42$ and the moduli space $\F_{22}$ of polarized $K3$ surfaces of genus $22$, we show that the universal $K3$ surface over $\F_{22}$ is unirational.
\end{abstract}

\maketitle

\section{Introduction}

The $19$-dimensional moduli space $\F_g$ of polarized $K3$ surfaces of genus $g$ (or of degree $2g-2$), parametrizing pairs  $[S,H]$, where $S$ is a $K3$ surface and $H\in \mbox{Pic}(S)$ is a primitive polarization class satisfying $H^2=2g-2$, is one of the most intriguing parameter spaces in algebraic geometry. In stark contrast to the moduli space of curves or abelian varieties, its Picard group is highly intricate, see \cite{BLMM}. The moduli space $\F_g$ is a quotient of a locally symmetric domain. Via this realization as an orthogonal modular variety one can employ automorphic
methods in order to study its Kodaira dimension. In this way, Gritsenko, Hulek and Sankaran \cite{GHS} proved that $\F_g$ is a variety of general type for $g>62$, as well as for $g=47,51,53,55,58,59,61$. On the other hand, using vector bundles on various rational homogeneous varieties,
in a celebrated series of papers Mukai \cite{M1}, \cite{M2}, \cite{M3}, \cite{M4}, \cite{M5} described the construction of general polarized $K3$ surfaces of genus $g\leq 12$, as well as for $g=13,16,18,20$.  In particular, the moduli space $\F_g$ is unirational for those values of $g$.  The case $g=14$, not covered by Mukai's work, has been settled using the birational isomorphism between $\F_{14}$ and the moduli space $\cC_{26}$ of special cubic fourfolds of discriminant $26$. Nuer \cite{Nu} first showed that $\F_{14}$ is uniruled. This was then improved in \cite{FV}, where we showed that the universal $K3$ surface $\F_{14,1}$ is rational, hence $\F_{14}$ is unirational. Recently Ma \cite{Ma} undertook a systematic study of the Kodaira dimension of the moduli space $\F_{g,n}$ of $n$-pointed $K3$ surfaces of genus $g$, in the spirit of a similar analysis of the Kodaira dimension of $\mm_{g,n}$ carried out in \cite{Log} and \cite{F}.

\vskip 3pt

The aim of this paper is to study the geometry of $\F_{22}$ using the connection between $K3$ surfaces and special cubic fourfolds of discriminant $42$. We establish the following result:

\begin{theorem}\label{main}
The universal $K3$ surface $\F_{22,1}$ of genus $22$ is unirational.
\end{theorem}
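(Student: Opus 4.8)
The plan is to exploit Hassett's birational identification $\cC_{42}\sim \F_{22}$ between special cubic fourfolds of discriminant $42$ and polarized $K3$ surfaces of genus $22$, and to reduce the unirationality of the total space $\F_{22,1}$ to that of an explicit, curve-theoretic auxiliary moduli space. The crucial numerical coincidence is $\dim \F_{22,1}=19+2=21=3\cdot 8-3=\dim\widetilde{\cM}_8$, together with the Brill--Noether count $\rho(8,1,5)=8-2\cdot(8-5+1)=0$: a general genus $8$ curve carries only finitely many pencils $g^1_5$, so the relative Brill--Noether variety $\AUX$ of such pencils over the spin moduli space $\widetilde{\cS}_8^{\mp}$ is again of dimension $21$. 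The strategy is therefore to construct a dominant rational map $\AUX\dashrightarrow \F_{22,1}$ and to prove that the source is unirational; combining these two statements yields the theorem.

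First I would promote the Hassett correspondence to an algebraic one. Rather than relying only on the lattice-theoretic (period) association of a $K3$ surface to a cubic fourfold, I would attach to the data $(C,\theta,A)$ of a genus $8$ curve $C$, a theta characteristic recorded by the spin structure $\theta$, and a pencil $A\in g^1_5$, first the associated polarized $K3$ surface $S$ of genus $22$ and then a distinguished point $s\in S$ produced by $A$, using a Mukai-type model of $C$ (its canonical image, respectively its realization as a linear section of $G(2,6)$) to build the cubic fourfold and its Hassett-associated $K3$. This furnishes a rational map
\[
\Phi\colon \AUX\dashrightarrow \F_{22,1},\qquad (C,\theta,A)\longmapsto (S,s),
\]
where, by construction, $S$ is the surface associated to the cubic fourfold attached to $(C,\theta)$.

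Next I would establish the unirationality of the source. Since $\cM_8$ is unirational and the spin cover $\widetilde{\cS}_8^{\mp}\to\widetilde{\cM}_8$ can be trivialized over a rational parameter space away from the theta-null divisor $\thet$, the spin moduli space $\widetilde{\cS}_8^{\mp}$ is itself unirational; as the finitely many pencils $g^1_5$ on a general curve are cut out by the vanishing of a section of an explicit vector bundle of the expected rank, the generically finite projection $\AUX\to\widetilde{\cS}_8^{\mp}$ admits a rational section, so $\AUX$ is unirational. Finally I would verify that $\Phi$ is dominant: since both spaces are irreducible of dimension $21$, it suffices to check generic finiteness, for instance by exhibiting one pair $(S,s)$ whose Hassett-associated cubic lies in $\cC_{42}$ and by computing that the differential of $\Phi$ is injective there. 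Dominance of $\Phi$ from a unirational source then gives the unirationality of $\F_{22,1}$.

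The main obstacle is the geometric realization itself. Hassett's association of $S$ to a cubic fourfold is defined purely at the level of Hodge structures, so the heart of the argument is to render it as an explicit construction starting from $(C,\theta,A)$ and, above all, to produce the marked point $s\in S$ in a way that varies rationally and sweeps out a general point of the universal surface. Equivalently, the delicate step is to prove that $\Phi$ is genuinely dominant rather than merely having rational image, which is exactly where the interplay between the spin structure $\theta$, the pencil $A$, and the degree $42$ polarization must be controlled.
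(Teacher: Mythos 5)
Your proposal has a genuine gap at its very center: the map $\Phi\colon \AUX\dashrightarrow \F_{22,1}$ is never constructed, and constructing it \emph{is} the entire content of the theorem. You postulate that a Mukai model of a genus $8$ curve $C$ together with a spin structure $\theta$ and a pencil $A\in G^1_5(C)$ "builds" a cubic fourfold in $\cC_{42}$ and a marked point on its Hassett-associated $K3$ surface, but no such construction is known, and nothing in your sketch supplies one: Mukai's realization of genus $8$ curves (and genus $8$ $K3$ surfaces) as linear sections of $\GG(2,6)$ is tied to Pfaffian cubics and discriminant $14$, not $42$, and there is no apparent mechanism by which $(\theta, A)$ would single out a point of a genus $22$ surface. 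The dimension coincidence $\dim \AUX = 21 = \dim \F_{22,1}$ and $\rho(8,1,5)=0$ is purely numerological; it cannot substitute for a morphism. You concede this yourself in the final paragraph ("the main obstacle is the geometric realization itself"), which is an admission that the proof is missing rather than sketched. The paper's actual route is entirely different and is designed precisely to replace Hassett's Hodge-theoretic correspondence by algebraic geometry: for $[S,H,p]\in\F_{22,1}$ the rational curve $\Delta_p\subseteq S^{[2]}\cong F(X)$ gives an $8$-nodal degree $9$ scroll $R_p\subseteq X$, and Theorem \ref{isom} shows $\F_{22,1}$ is birational to the incidence variety $\mathfrak{X}$, a $\PP^5$-bundle over $\hh$; unirationality is then reduced, via the residual curve $\Gamma$ of four rulings in a quadric section of $R$, to the moduli space $\mathfrak{Hyp}_{4,8}$ of $8$-nodal curves of arithmetic genus $12$ with \emph{hyperelliptic genus $4$} normalization, which is parametrized explicitly (Theorem \ref{thm:domin}). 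Genus $8$ curves, spin structures, and $g^1_5$'s appear nowhere in this chain; the macros you borrowed ($\AUX$, $\widetilde{\cS}_8^{\mp}$, $\thet$) are defined in the preamble but never used in the paper, and they do not point to an alternative argument.

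There is also a secondary error in your unirationality claim for the source. The Brill--Noether cover $\mathfrak G^1_5(\cM_8)\to\cM_8$ is generically finite of degree $14$ (the Castelnuovo number for $g=8$, $r=1$, $d=5$), and by Eisenbud--Harris its monodromy is the full symmetric group $\mathfrak{S}_{14}$; a generically finite cover with transitive (let alone full symmetric) monodromy of degree $>1$ admits \emph{no} rational section, and the fact that its fibers are zero loci of sections of a vector bundle of expected rank is irrelevant to this. So your argument that $\AUX\to\widetilde{\cS}_8^{\mp}$ has a rational section fails. (Unirationality of $\mathfrak G^1_5(\cM_8)$ itself can be rescued via Hurwitz spaces of degree $5$ covers, but the fiber product with the spin cover is yet another unproven step.) Even setting the missing map $\Phi$ aside, the proposal does not establish that its source is unirational.
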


In particular, $\F_{22}$ is unirational as well. Note that $22$ is the highest genus where it is known that the moduli space $\F_g$ is \emph{not} of general type. Our approach to $\F_{22}$ relies on the relation between Noether-Lefschetz special cubic fourfolds and polarized $K3$ surfaces, which we explain next.

\vskip 4pt

We fix a smooth cubic fourfold $X\subseteq \PP^5$. Recall the important fact that the Fano variety of lines
$F(X):=\bigl\{\ell\in \GG(1,5):\ell\subseteq X\bigr\}$ is a hyperk\"ahler variety of dimension $4$, see  \cite{BD}. Its primitive cohomology $H^4_{\mathrm{prim}}(X,\mathbb Z)$, displaying the Hodge numbers $(0,\  1,\   20,\  1, \ 0)$, looks like the Tate twist of the middle cohomology of a $K3$ surface, except it has signature $(20,2)$ rather than $(19,3)$. When  $X$ is very general, the lattice $A(X):=H^{2,2}(X)\cap H^4(X, \mathbb Z)$ consists only of classes of complete intersection surfaces, that is,
$A(X)=\langle h^2 \rangle$, where $h\in \mbox{Pic}(X)$ is the hyperplane class, see \cite{V}. Let $\cC$ be the $20$-dimensional coarse moduli space of smooth cubic fourfolds $X\subseteq \PP^5$ and denote by  $\cC_d$ the locus of \emph{special} cubic fourfolds $X$ characterized by the existence of an embedding of a saturated rank $2$ lattice
$$\mathbb L:=\langle h^2, T\rangle \hookrightarrow A(X),$$
of discriminant $\mbox{disc}(\mathbb L)=d$, where $T$ is a codimension $2$ algebraic cycle of $X$ not homologous to a complete intersection. Hassett \cite{H} showed that $\cC_d\subseteq \cC$ is nonempty and if so, an irreducible divisor, if and only if $d>6$ and $d\equiv 0,2 \ (\mbox{mod } 6)$. A conjecture of Kuznetsov \cite{Kuz} predicts that all cubic fourfolds $[X]\in \cC_{2(n^2+n+1)}$ are rational. This has  been confirmed in the classical case $d=14$, see \cite{Fa}, \cite{BR}, and more recently when $d=26$ by Russo and Staglian\`o \cite{RS}. Very recently, the same authors announced a proof of the rationality of all cubic fourfolds from $\cC_{42}$, see \cite{RS2}.

\vskip 4pt

For $d=42$, Hassett's work \cite{H} implies the existence of a rational map of degree $2$
$$
\varphi\colon \F_{22}\rightarrow \cC_{42}, \ \ \varphi([S, H])=[X],
$$
where the cubic fourfold $X$ is characterized by the existence of an isomorphism
\begin{equation}\label{fanolines}
S^{[2]}\cong F(X)\subseteq \GG(1, 5).
\end{equation}

\vskip 4pt

Lai's paper \cite{L} represents an important first step in understanding the relation between $\F_{22}$ and $\cC_{42}$. We summarize its results. Starting with a polarized $K3$ surface $[S,H]\in \F_{22}$, for each point $p\in S$ one considers the rational curve
$$\Delta_p:=\bigl\{\xi \in S^{[2]}: \mathrm{supp}(\xi)=\{p\}\bigr\}.$$

Under the isomorphism $S^{[2]}\cong F(X)$ described above, $\Delta_p$ corresponds
to a rational curve of degree $9$ inside $F(X)\subseteq \GG(1,5)$, that is, to a degree $9$ scroll $R_p\subseteq X$. The double point formula implies that, as long as it has isolated nodal singularities, $R_p$ has $8$ nodes and no further singularities. This is precisely the content of \cite[Theorem 0.3]{L}. We denote by $\mathfrak{H}_{\mathrm{scr}}$ the $PGL(6)$-quotient of the Hilbert scheme of $8$-nodal scrolls $R\subseteq \PP^5$ of degree $9$. Lai shows \cite[Proposition 0.4]{L} that $\mathfrak{H}_{\mathrm{scr}}$ has the expected codimension $8$ inside the parameter space of all scrolls of degree $9$ in $\PP^5$, in particular $\mbox{dim}(\mathfrak{H}_{\mathrm{scr}})=16$.

\vskip 4pt

One can then set up the incidence correspondence between scrolls and cubic fourfolds:
$$\xymatrix{
  & \mathfrak{X}:=\Bigl\{(X,R):R\subseteq X, \mbox{ deg}(R)=9, \ [X]\in \cC_{42}\Bigr\}\big / PGL(6) \ar[dl]_{\pi_1} \ar[dr]^{\pi_2} & \\
   \cC_{42} & & \mathfrak{H}_{\mathrm{scr}}       \\
                 }$$
For a general $[R]\in \mathfrak{H}_{\mathrm{scr}}$ one computes that $h^0\bigl(\PP^5, \mathcal{I}_{R/\PP^5}(2)\bigr)=0$ and $h^0\bigl(\PP^5, \mathcal{I}_{R/\PP^5}(3)\bigr)=6$, It follows that  $\mathfrak{X}$ is birational to a $\PP^{5}$-bundle over the variety $\mathfrak{H}_{\mathrm{scr}}$. Since $\pi_1$ is dominant, this implies that $\cC_{42}$ is uniruled.

\vskip 4pt

This is the point where Lai's paper \cite{L}  ends  and our analysis starts. We first introduce the universal $K3$ surface $u\colon \F_{22,1}\rightarrow \F_{22}$, then the map
$$\tilde{\varphi}\colon \F_{22,1}\rightarrow \mathfrak{X},  \ \ \mbox{ } \ \tilde{\varphi}([S,p]):=[X, R_p],$$
where $R_p$ is the degree $9$ scroll contained in $X$  corresponding to the rational curve $\Delta_p\subseteq F(X)$ under the isomorphism (\ref{fanolines}). We observe that although $\varphi$ has degree $2$, that is, for a general fourfold $[X]\in \cC_{42}$ one has \emph{two} polarized $K3$ surfaces realizing the isomorphism (\ref{fanolines}), this ambiguity disappears once we lift to the universal $K3$ surface. We prove the following:

\begin{theorem}\label{isom}
The map $\tilde{\varphi}\colon \F_{22,1}\rightarrow \mathfrak{X}$ is a birational isomorphism.
\end{theorem}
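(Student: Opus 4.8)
The plan is to show that $\tilde\varphi$ is birational by exhibiting a rational inverse, using the fact that $\varphi$ already has degree $2$ and that the choice of point $p \in S$ resolves the two-to-one ambiguity. The key structural observation is the commuting diagram
\[
\xymatrix{
\F_{22,1} \ar[r]^{\tilde\varphi} \ar[d]_{u} & \mathfrak{X} \ar[d]^{\pi_1} \\
\F_{22} \ar[r]^{\varphi} & \cC_{42}
}
\]
where $u$ is the universal surface and $\pi_1$ is the projection to $\cC_{42}$. Since $\dim \F_{22,1} = 19 + 2 = 21$ and $\mathfrak{X}$ is birational to a $\PP^5$-bundle over the $16$-dimensional $\mathfrak{H}_{\mathrm{scr}}$, so $\dim \mathfrak{X} = 21$ as well, a dimension count is consistent with birationality. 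First I would verify that $\tilde\varphi$ is a morphism on a dense open set (or at least a well-defined rational map): for $[S,p]$ general, $\Delta_p \subseteq S^{[2]} \cong F(X)$ is a rational curve of degree $9$, and by Lai's \cite[Theorem 0.3]{L} the corresponding scroll $R_p \subseteq X$ has exactly $8$ nodes, so $[X, R_p]$ is a genuine point of $\mathfrak{X}$.

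Next I would construct the inverse rational map $\psi\colon \mathfrak{X} \dashrightarrow \F_{22,1}$. Given a general pair $[X, R]\in \mathfrak{X}$, the fourfold $X$ determines via \eqref{fanolines} a $K3$ surface $S$ together with the isomorphism $S^{[2]}\cong F(X)$ — but only up to the degree-$2$ ambiguity of $\varphi$, i.e.\ there are a priori two surfaces $S, S'$ with $(S')^{[2]}\cong F(X)$. The crucial point is that the scroll $R$ remembers which surface and which point: the curve $\Delta \subseteq F(X)$ corresponding to $R$ is one of the distinguished rational curves $\Delta_p$, so it selects both the surface $S$ (the one for which $\Delta$ has the form $\{\xi : \mathrm{supp}(\xi)=\{p\}\}$) and the point $p\in S$ recovered as the common support. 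Thus I would set $\psi([X,R]) := [S,p]$ and check that $\psi\circ\tilde\varphi$ and $\tilde\varphi\circ\psi$ are the identity on dense opens. The \emph{resolution of the ambiguity} is the heart of the argument: I would argue that for exactly one of the two surfaces $S, S'$ the curve $\Delta$ lies in the family $\{\Delta_p\}_{p\in S}$, since the two families $\{\Delta_p\}$ and $\{\Delta'_{p'}\}$ sweep out different rational-curve classes on $F(X)$, or are exchanged by the covering involution in a way that is pinned down once a specific $\Delta$ is fixed.

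The main obstacle I anticipate is precisely controlling this involution and proving that a general point of $\mathfrak{X}$ lies over a \emph{single} pair $[S,p]$, rather than over both surfaces simultaneously. One must rule out the possibility that the same scroll $R$ arises as $R_p$ for a point $p\in S$ and also as $R'_{p'}$ for a point $p'\in S'$, which would force $\tilde\varphi$ to have degree $2$ rather than $1$. I would address this by analyzing the covering involution $\iota$ of $\varphi$ on the level of the universal family: the involution acts on $F(X)$ (or on the pair of Hilbert schemes $S^{[2]}, (S')^{[2]}$) and I would show that it does \emph{not} preserve the distinguished family of degree-$9$ curves $\{\Delta_p\}$, so that the two surfaces give two genuinely distinct $16$-dimensional families of scrolls inside the $21$-dimensional $\mathfrak{X}$, whose general member belongs to only one. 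Once injectivity of $\tilde\varphi$ on a dense open set is established, together with the dimension count $\dim\F_{22,1}=\dim\mathfrak{X}=21$ and generic smoothness, birationality follows from Zariski's main theorem.
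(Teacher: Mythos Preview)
Your plan matches the paper's proof: one constructs the inverse by showing that the curve $Z_R\subseteq F(X)$ of rulings of $R$ lies in the family $\{\Delta_p\}_{p\in S}$ for exactly one of the two $K3$ surfaces over $[X]$. The paper carries out your ``different rational-curve classes'' step via an explicit lattice computation (Proposition~\ref{scrolls1}), proving that any effective degree-$9$ $1$-cycle on $S^{[2]}$ (with $\mathrm{Pic}(S)=\mathbb Z\cdot H$) has class either $\delta_p$ or $6f_p-55\delta_p$, that these are exchanged by the isomorphism $r\colon S^{[2]}\to S'^{[2]}$, and that $[Z]=\delta_p$ forces $Z=\Delta_p$ for a unique $p\in S$---this yields the inverse directly, so no separate dominance or Zariski-type argument is required.
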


Since $\mathfrak{X}$ is a $\PP^5$-bundle over $\mathfrak{H}_{\mathrm{scr}}$, the unirationality of $\F_{22,1}$ will be implied by that
of the moduli space $\mathfrak{H}_{\mathrm{scr}}$. To summarize the situation, we have the following commutative diagram:

$$
\xymatrix{
\F_{22,1}  \ar[r]^-{\tilde{\varphi}} \ar[d]^{u} &\mathfrak{X}
 \ar[d]^{\pi_1}  \\
 \F_{22}  \ar[r]^-{\varphi} & \cC_{42}
}$$

We now explain our parametrization of the moduli space of $8$-nodal nonic scrolls.   We start by considering the Hirzebruch surface $\FF_1:=\mbox{Bl}_{\mathpzc{o}}(\PP^2)$, where $\mathpzc{o}\in \PP^2$, and denote by $h$ the class of a line and by $E$ the exceptional divisor. The smooth degree $9$ scroll $R':=S_{4,5}\subseteq \PP^{10}$ is the image of the linear system
$$\phi_{|5h-4E|}\colon \FF_1\hookrightarrow \PP^{10}.$$
We choose a  $4$-plane $\Lambda \in \GG(4, 10)$ which is $8$-secant to the secant variety $\mbox{Sec}(R')\subseteq \PP^{10}$. We may assume $\Lambda\cap R'=\emptyset$ and refer to \cite[Sections 3.2-3.3]{L} for the proof that such a $4$-plane $\Lambda$ exists. Consider the restriction to $R'$ of the projection $\pi_{\Lambda}$ with center $\Lambda$
$$\pi:=\pi_{\Lambda|R'}\colon R'\rightarrow R\subseteq \PP^5,$$  where  $R:=\pi(R')$. Then $R$  is an $8$-nodal scroll of degree $9$. If for $i=1, \ldots, 8$, we have that $\langle x_i, y_i\rangle \cap \Lambda\neq \emptyset$ for certain points $x_i, y_i\in R'$, then the (nodal) singularities of $R$ appear as $n_i:=\pi_{\Lambda}(x_i)=\pi_{\Lambda}(y_i)$. Up to the action of $PGL(6)$ on the ambient projective space $\PP^5$, each $8$-nodal nonic scroll $[R]\in \mathfrak{H}_{\mathrm{scr}}$ appears in this way.

\vskip 4pt

We now fix an unordered set of \emph{four} general rulings $\ell_1, \ell_2, \ell_3, \ell_4$ of $R$, thus they can be assumed to be disjoint from $\mbox{Sing}(R)$. Since containing a line imposes \emph{three} conditions on the linear system of quadrics in $\PP^5$ and since $\mbox{dim } |\OO_{\PP^5}(2)|=20$, it follows that there exists a \emph{unique} quadric $Q\subseteq \PP^5$ containing the rulings $\ell_1, \ldots, \ell_4$, as well as the nodes $n_1, \ldots, n_8$.
We write
\begin{equation}\label{def:gamma}
R\cdot Q=\ell_1+\ell_2+\ell_3+\ell_4+\Gamma.
\end{equation}

It will turn out that  the residual curve  $\Gamma\subseteq \PP^5$  is a degree $14$ integral  curve of arithmetic genus $12$ having nodes at the points $n_1, \ldots, n_8$. Assuming this, let $$C:=\pi^{-1}(\Gamma)\subseteq R'$$ be the normalization of $\Gamma$. Then from (\ref{def:gamma}) we find that $C\in |6h-4E|$. Therefore  $C$ is a hyperelliptic curve of genus $4$ which passes through the points $x_i, y_i\in R'$, for $i=1, \ldots, 8$.  The degree $2$ pencil on $C$ is cut out by the rulings of $R'$, that is, $\OO_C(h-E)\in W^1_2(C)$. Denoting by $\iota:C\rightarrow C$ the hyperelliptic
involution, we observe that
$$R=\bigcup_{x\in C} \bigl\langle \pi(x), \pi(\iota(x))\bigr\rangle\subseteq \PP^5,$$
that is, the degree $9$ scroll $R$ can be recovered from the curve $\Gamma\subseteq \PP^5$.

\vskip 4pt

We denote by $\P$ the parameter space of pairs $[R, \ell_1+\cdots+\ell_4]$, where $R\subseteq \PP^5$ is an $8$-nodal scroll of degree $9$ and $\ell_1, \ldots, \ell_4$ are rulings of $R$, viewed as an \emph{unordered} set.  In the definition of $\P$ we quotient out by the $PGL(6)$-action on $\PP^5$. The forgetful map $$\P \rightarrow \mathfrak{H}_{\mathrm{scr}}$$ is birational to a $\PP^1$-bundle corresponding to the choice of the four rulings, in particular $\mbox{dim}(\P)=17$. Let $\mathfrak{Hyp}_{4,8}$ be the moduli space of pairs $[\Gamma, L]$, where $\Gamma$ is an integral $8$-nodal curve of arithmetic genus $12$, whose normalization $\nu:C\rightarrow \Gamma$ is a hyperelliptic curve of genus $4$ and $L\in W^2_8(\Gamma)$, that is, $L$  is a line bundle of degree $8$ on $\Gamma$ with $h^0(\Gamma, L)\geq 3$.  Note that by Riemann-Roch, in this case $\omega_{\Gamma}\otimes L^{\vee}\in W^5_{14}(\Gamma)$. We have the following result, reducing the study of $\F_{22,1}$ to that of a certain moduli space of curves.

\begin{theorem}\label{unir}
There exists a birational isomorphism $\chi\colon \P \stackrel{\cong}\dashrightarrow \mathfrak{Hyp}_{4,8}$ \ given by
$$\chi\bigl([R, \ell_1+\ell_2+\ell_3+\ell_4]\bigr)=[\Gamma, \omega_{\Gamma}(-1)].$$
\end{theorem}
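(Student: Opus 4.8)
The plan is to prove that $\chi$ is birational by exhibiting an explicit rational inverse and checking that the two maps compose to the identity on a dense open subset; birationality then follows without any independent dimension count.

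First I would record that $\chi$ is a well-defined dominant rational map. For a general $[R,\ell_1+\cdots+\ell_4]\in\P$ the quadric $Q$ is unique, so $\Gamma$ is determined by (\ref{def:gamma}); the intersection computations on $\FF_1$ already indicated in the text show that the normalization $C=\pi^{-1}(\Gamma)$ lies in $|6h-4E|$, is hyperelliptic of genus $4$, and that $\Gamma$ is integral, $8$-nodal of arithmetic genus $12$ with $\deg_\Gamma\OO_\Gamma(1)=14$. That $L:=\omega_\Gamma(-1)$ lies in $W^2_8(\Gamma)$ follows from Riemann--Roch on the Gorenstein curve $\Gamma$ together with nondegeneracy, since $h^0(\Gamma,\omega_\Gamma(-1))=h^0(\Gamma,\OO_\Gamma(1))-3\geq 3$. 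Thus $\chi$ is defined on a dense open subset of $\P$.

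The core of the argument is the construction of $\chi^{-1}$. Starting from a general $[\Gamma,L]\in\mathfrak{Hyp}_{4,8}$, I would set $M:=\omega_\Gamma\otimes L^\vee\in W^5_{14}(\Gamma)$ and use $|M|$ to re-embed $\Gamma$ in $\PP^5$, restoring $\OO_\Gamma(1)=M$ and $L=\omega_\Gamma(-1)$; here one must verify that $h^0(\Gamma,M)=6$ and that $|M|$ defines an embedding of the nodal curve $\Gamma$. Using the hyperelliptic involution $\iota$ on the normalization $C$, I would then reconstruct the scroll as $R:=\bigcup_{x\in C}\langle\nu(x),\nu(\iota x)\rangle\subseteq\PP^5$, exactly the description highlighted after (\ref{def:gamma}); one checks that $R$ is a degree $9$ scroll whose only singularities are nodes, located precisely at the points $n_i=\nu(x_i)=\nu(y_i)$, with the two branches $x_i,y_i$ lying on the two distinct rulings that cross at $n_i$, so that $[R]\in\hh$. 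Finally the four rulings are recovered as the residual $R\cdot Q-\Gamma$ of $R$ with a quadric $Q\supseteq\Gamma$: since $R$ lies on no quadric while $\deg(R\cap Q)=18$, the residual to $\Gamma$ has degree $4$, and I would show it consists of rulings, producing a point $[R,\ell_1+\cdots+\ell_4]\in\P$.

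It then remains to check that the two constructions are mutually inverse on general points, which is essentially bookkeeping once the reconstruction is in place: the passage $[R,\ell_1+\cdots+\ell_4]\mapsto\Gamma\mapsto R$ returns the original scroll by the union-of-chords formula, the residual-quadric step returns the original rulings, and $L=\omega_\Gamma(-1)$ returns $\OO_\Gamma(1)=M$ and hence the original embedding. I expect the main obstacle to be the inverse construction of the previous paragraph, and specifically two transversality-type statements: that $|M|$ really embeds the general $[\Gamma,L]$ as a nonic-scroll residual (equivalently, that the general member of $\mathfrak{Hyp}_{4,8}$ arises from a scroll, which is the dominance of $\chi$), and that the reconstructed $R$ acquires genuine nodes, and only nodes, at the eight prescribed points. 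Both are most safely handled by an explicit-example or degeneration argument exhibiting one $[\Gamma,L]$ for which the chord scroll is visibly $8$-nodal of degree $9$, after which openness of these conditions propagates the conclusion to the general point.
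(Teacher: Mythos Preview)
Your plan is essentially the paper's own argument, and the key step---reconstructing $R$ from $\Gamma$ as the chord scroll $\bigcup_{x\in C}\langle \nu(x),\nu(\iota x)\rangle$ and then reading off $\ell_1+\cdots+\ell_4$ as the residual of $R\cdot Q$---is exactly what the paper uses to show $\chi$ is generically injective. The degeneration you anticipate needing is precisely Proposition~\ref{well-def}, where the paper exhibits a reducible $\Gamma=\Gamma_1+\Gamma_2$ (two rational curves in $|3h-2E|$ on the cubic scroll) for which the chord scroll is visibly a degree~$9$ scroll with $8$ nodes.

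The one point where your write-up diverges from the paper is the claim that you can conclude birationality ``without any independent dimension count''. The paper does \emph{not} verify directly that $\psi$ is defined on a general point of $\mathfrak{Hyp}_{4,8}$; instead it proves generic injectivity of $\chi$ and then invokes the auxiliary map $\vartheta\colon \mathcal{T}\dashrightarrow \mathfrak{Hyp}_{4,8}$ of Theorem~\ref{thm:domin}, whose dominance from the irreducible $\mathcal{T}$ gives both that $\mathfrak{Hyp}_{4,8}$ is irreducible and that $\dim\mathfrak{Hyp}_{4,8}\le 17=\dim\P$. Your ``one example plus openness'' argument for the inverse tacitly uses this same irreducibility: without it, a single $[\Gamma,L]$ where the chord scroll is $8$-nodal only controls the component containing that point, and you cannot conclude that $\chi$ is dominant onto all of $\mathfrak{Hyp}_{4,8}$. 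So in the end you need the input from $\vartheta$ (or some equivalent irreducibility statement) just as the paper does; framed either way, the substantive content is the same.
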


Theorem \ref{main} now follows once we establish the unirationality of $\mathfrak{Hyp}_{4,8}$. We indicate how to carry this out. Start with a general element $[\Gamma, L]\in \mathfrak{Hyp}_{4,8}$, viewed as an $8$-nodal degree $14$ curve $\Gamma\subseteq \PP^5$ embedded by the line bundle $\omega_{\Gamma}\otimes L^{\vee}$. We shall show that a suitably general such curve $\Gamma$ is projectively normal, thus the kernel of the multiplication map
$$\mbox{Sym}^2 H^0(\Gamma, \OO_{\Gamma}(1))\rightarrow H^0(\Gamma, \OO_{\Gamma}(2))$$
is $4$-dimensional. We can write
$$\mathrm{Bs} \ \bigl|\mathcal{I}_{\Gamma/\PP^5}(2)\bigr|=\Gamma+B.$$
The residual curve  $B\subseteq \PP^5$ is a conic such that $\Gamma \cdot B=6$. We denote by $\Pi:=\langle B\rangle \subseteq \PP^5$ the plane spanned by $B$. There exists a $3$-dimensional subspace $V\subseteq H^0\bigl(\PP^5, \mathcal{I}_{\Gamma/\PP^5}(2)\bigr)$ consisting of quadrics containing the plane $\Pi$. We write
\begin{equation}\label{defi:T}
\mathrm{Bs}\ |V|=\Pi+T,
\end{equation}
where $T\subseteq \PP^5$ is a degree $7$ surface lying on three quadrics that intersect along the  $2$-plane $\Pi$. It is not hard to see
that $T\cong \mathrm{Bl}_9(\PP^2)$ is the blow-up of $\PP^2$ at $9$ general points in $\PP^2$. Moreover, the map
$\varphi: \mathrm{Bl}_9(\PP^2)\hookrightarrow T\subseteq \PP^5$ implicitly defined by (\ref{defi:T}) is induced by the linear system
$$|4h-E_1-\cdots-E_9|,$$
where $E_1, \ldots, E_9$ are the exceptional divisors. Via the isomorphism $T\cong \mathrm{Bl}_9(\PP^2)$, one realizes $\Gamma$ as an octic plane curve with $17$ nodes divided in two groups: namely the $9$ points where $\PP^2$ is blown up and the remaining $8$ nodes. This plane model is  helpful to prove the next result:

\begin{theorem}\label{hyp_uni}
The moduli space $\mathfrak{Hyp}_{4,8}$ is unirational.
\end{theorem}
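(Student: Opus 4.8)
The plan is to present $\mathfrak{Hyp}_{4,8}$ as the image of a rational variety under a dominant rational map, by reversing the plane-octic construction that precedes the statement. The guiding principle is to \emph{begin} with the hyperelliptic curve and map it to the plane, rather than cutting out the hyperelliptic octics inside the family of all $17$-nodal octics: a general $17$-nodal plane octic has trigonal, not hyperelliptic, normalization, so hyperellipticity is a non-generic, a priori non-rational condition that is best avoided by building it in from the start.

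Concretely I would proceed as follows. Let $C$ run through the $\PP^{17}$ of curves in $|6h-4E|$ on $\FF_1$, whose general member is smooth hyperelliptic of genus $4$ with $g^1_2=\OO_C(h-E)$. Choose eight points $p_1,\dots,p_8$ on $C$ and put $N:=\OO_C(p_1+\cdots+p_8)$; as $\deg N=2g$, the bundle $N$ is nonspecial with $h^0(C,N)=5$, so $|N|\cong\PP^4$, and letting the eight points vary makes $N$ sweep out all of $\Pic^8(C)$ without any reference to the Jacobian, at the harmless cost of a positive-dimensional redundancy. A general net, that is a $\PP^2\subseteq|N|\cong\PP^4$, then maps $C$ birationally onto a plane octic, and the genus formula forces exactly $\binom{7}{2}-4=17$ nodes, automatically and with $C$ hyperelliptic by construction. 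Blowing up nine of these nodes yields $T\cong \mathrm{Bl}_9(\PP^2)$ embedded by $|4h-\sum E_i|$ as in (\ref{defi:T}); the strict transform $\Gamma$, of class $8h-2\sum_{i=1}^9E_i$ on $T$, keeps its eight residual nodes, and $L$ is the line bundle with $\nu^{*}L=N$ and $\OO_\Gamma(1)=\omega_\Gamma\otimes L^{\vee}$. The parameter space of data $(C,p_1,\dots,p_8,V)$ is rational — a Grassmannian bundle over the eight-fold fibre power of the universal curve over $\PP^{17}$, itself a projective bundle over an open subset of $\FF_1^{\,8}$ — so it remains only to check that this assignment dominates $\mathfrak{Hyp}_{4,8}$.

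Dominance I would establish by exhibiting the inverse, which is precisely the plane model of the preceding discussion: a general $[\Gamma,L]$, embedded in $\PP^5$ by $\omega_\Gamma\otimes L^{\vee}$, determines the residual conic $B$, the plane $\Pi$, the net $V$ with $\mathrm{Bs}|V|=\Pi+T$, hence $T\cong \mathrm{Bl}_9(\PP^2)$ and the plane octic, whose hyperelliptic normalization $C$, bundle $N=\nu^*L$ and net recover a general point of the source. Thus the two constructions are mutually inverse on dense opens, and in particular the forward map is dominant, proving the unirationality of $\mathfrak{Hyp}_{4,8}$.

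The real work — and the main obstacle — is the genericity package underlying both directions. One must verify that for general $(C,p_1,\dots,p_8,V)$ the net is not composed with the $g^1_2$, so that the octic is integral with exactly $17$ honest nodes; that a general nine of these impose independent conditions and put $T=\mathrm{Bl}_9(\PP^2)$ in its expected embedding by $|4h-\sum E_i|$; and, most delicately, that the resulting $\Gamma\subseteq\PP^5$ is projectively normal with $h^0(\I_{\Gamma/\PP^5}(2))=4$, so that the residual conic $B$ and the net $V$ cutting out $T$ exist as asserted and the backward construction genuinely inverts the forward one. I expect this to be handled by producing a single explicit curve for which the pertinent cohomology vanishings hold and then spreading out by semicontinuity. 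By contrast the two features that look threatening at first — the hyperelliptic condition and the degree-$8$ Picard variety — have already been neutralized, the former by the curve-first construction and the latter by representing $N$ through eight freely moving points.
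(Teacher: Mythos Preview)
Your plan is close in spirit to the paper's, but there is a genuine gap: your parameter space $(C,p_1,\dots,p_8,V)$ does not record the partition of the $17$ nodes of the plane octic into the $8$ that survive on $\Gamma$ and the $9$ that get blown up, yet this partition is exactly what an element of $\mathfrak{Hyp}_{4,8}$ encodes. The eight points $p_i$ are a red herring here: they serve only to represent the class $N\in\Pic^8(C)$ and bear no relation to the eventual nodes, which arise from pairs $(x_i,y_i)$ identified by the net $V$. Writing ``blowing up nine of these nodes'' hides a choice that is not made rationally by your data; appending it means passing to a degree $\binom{17}{9}$ \'etale cover of your rational base, and there is no reason that cover should be rational or even unirational. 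Equivalently, in the inverse direction a general $[\Gamma,L]$ hands you back $(C,N,V)$ \emph{together with} a distinguished $8$-element subset of the node set, a datum your source never carried.

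The paper resolves this by reordering the construction so that the eight nodes are labelled from the outset. One embeds $\FF_1$ as the cubic scroll $Z\subseteq\PP^4$ via $|2h-E|$ (so that $N=\OO_C(2h-E)$ is built into the model, cf.\ Proposition~\ref{pic:hyp}) and then chooses the projection centre $\ell\in\GG(1,4)$ \emph{first}. The key device is to pick eight points $t_1,\dots,t_8$ on the surface $Z$, not on $C$: since $\pi_\ell\colon Z\to\PP^2$ has degree $3$, each $t_i$ determines the residual pair $x_i,y_i\in Z$ via $\langle\ell,t_i\rangle\cdot Z=t_i+x_i+y_i$, and these pairs are \emph{automatically} identified by $\pi_\ell$. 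Only then does one choose $C$ in the pencil $\bigl|\I_{\{x_1,y_1,\dots,x_8,y_8\}/Z}(6h-4E)\bigr|$, so that $n_i':=\pi_\ell(t_i)$ are by construction the eight marked nodes of $\Gamma'=\pi_\ell(C)$ and the remaining nine are the ones to blow up. The resulting space $\mathcal{T}$ is a $\PP^1$-bundle over $Z^8\times\GG(1,4)$, hence rational, and dominates $\mathfrak{Hyp}_{4,8}$; the genericity claims you correctly anticipate are then checked by an explicit degeneration. Your intuition to build in hyperellipticity from the start is exactly right --- what is missing is this trisecant trick that simultaneously labels the nodes.
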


To prove Theorem \ref{hyp_uni}, we fix a cubic scroll $Z\subseteq \PP^4$ obtained by embedding the Hirzebruch surface
$\FF_1:=\mbox{Bl}_{\mathpzc{o}}(\PP^2)$ by the linear system $|2h-E|$. We consider the parameter space

\begin{align*}
\mathcal{T}=\Bigl\{(t_1, \ldots, t_8, \ell, C): t_i\in Z \mbox{ for } i=1, \ldots, 8, \ \ell\in \GG(1,4) \mbox{ is a  line in } \PP^4, \\
C\in \bigl|\mathcal{I}_{\{x_1, y_1, \ldots, x_8, y_8\}/Z}(6h-4E)\bigr|, \mbox{ where } \langle \ell, t_i\rangle\cdot Z =t_i+x_i+y_i, \mbox{ for } i=1, \ldots,8.\Bigr\}
\end{align*}

Note that $\mbox{dim } \bigl|\mathcal{I}_{\{x_1, y_1, \ldots, x_8, y_8\}/Z}(6h-4E)\bigr|=1$, hence the  map $\mathcal{T}\rightarrow Z^8\times \GG(1,4)$ is birationally a locally trivial $\PP^1$-bundle over a rational variety. Therefore $\mathcal{T}$ is rational. The curve $C$ is hyperelliptic of genus $4$. Denoting by $\pi_{\ell}:\PP^4\dashrightarrow \PP^2$ the projection with center $\ell$, observe that $n_i':=\pi_{\ell}(x_i)=\pi_{\ell}(y_i)$ for $i=1, \ldots, 8$.  The dominant rational map
$$\vartheta\colon \mathcal{T}\dashrightarrow \mathfrak{Hyp}_{4,8}$$
needed to prove Theorem \ref{hyp_uni} is obtained by associating to the point $\bigl(t_1, \ldots, t_8, \ell, C\bigr)\in \mathcal{T}$ essentially the projected curve $\Gamma:=\pi_{\ell}(C)$. This is a nodal octic plane curve having $8$ distinguished nodes at $n_1', \ldots, n_8'$, as well as $9$ further nodes. The image under the map $\varphi$ of the proper transform of $\Gamma'$ in the blow-up of $\PP^2$ at these $9$ points gives rise to an element of $\mathfrak{Hyp}_{4,8}$. For further details on the definition of the map $\vartheta$ we refer to Theorem \ref{thm:domin}.

\vskip 3pt

It turns out that proving directly the various transversality assumptions implicit in this sketched proof of Theorem \ref{unir} is not straightforward. Instead, in the rest of the paper we shall reverse the argument presented in the Introduction. First we show that $\mathfrak{Hyp}_{4,8}$ is unirational (see Theorem \ref{thm:domin}), then using the explicit unirational parametrization found in this way, we show that the map
$\chi\colon \mathcal{P}\dashrightarrow \mathfrak{Hyp}_{4,8}$ is well defined, as well as birational.

\vskip 4pt

\noindent {\bf Acknowledgment:} We are grateful to the referee for a very careful reading of the paper and for many good suggestions that improved
the presentation. 

\section{The moduli space $\F_{22}$ via special cubic fourfolds}

We denote by $\mathcal F_g$  the irreducible $19$-dimensional moduli space of smooth polarized $K3$ surfaces $[S,H]$  of genus $g$, that is, with $H\in \mbox{Pic}(S)$ being a nef class satisfying $H^2=2g-2$. Let $u\colon \F_{g,1}\rightarrow \F_g$ be  the universal $K3$ surface of genus $g$ in the sense of stacks. Each fiber $u^{-1}([S,H])$ is thus identified with the $K3$ surface $S$.

\vskip 4pt

We fix a smooth cubic fourfold $X\subseteq \PP^5$ and denote by $h$ its hyperplane class. The Hodge structure on the primitive cohomology
$H^4_{\mathrm{prim}}(X,\mathbb Z)$ is similar to the twist of the middle cohomology of a $K3$ surface. Since the signatures (with respect to the intersection form) are different, (20, 2) and (19, 3) respectively, one has to pass to sub-Hodge structures of codimension one, both having signature $(19,2)$, to have the possibility of realizing an isomorphism of Hodge structures between the two sides. On the cubic fourfold side one requires the existence of a class $T\in H^{2,2}(X)$, whereas on the $K3$ side one requires the existence of a polarization $H\in \mbox{Pic}(S)$ such that the following isomorphism of Hodge structures holds
\begin{equation}\label{hassettisom1}
\langle h^2, T\rangle ^{\perp}\cong H^2_{\mathrm{prim}}(S,\mathbb Z)(-1).
\end{equation}
Denoting by $d:=\mbox{disc}(\langle h^2, T\rangle)=H^2$, it is proved in \cite[Theorem 5.1.3]{H} that the isomorphism (\ref{hassettisom1}) is realized for any $d>6$ such that $d\equiv 0, 2 \ (\mbox{mod } 6)$ that is not divisible by $4, 9$ or by any prime $p\equiv 2 \ (\mbox{mod } 3)$.
When $d=2(n^2+n+1)$, the isomorphism (\ref{hassettisom1}) takes the geometric form (\ref{fanolines})
$$S^{[2]}\cong F(X)\subseteq \GG(1,5).$$
This opens the way to a study of the moduli spaces $\F_{n^2+n+2}$ where $n\geq 2$, using the concrete projective geometry of cubic fourfolds. The case $n=2$ (that is, $d=14$) is classical and essentially due to Fano \cite{F}; we refer to \cite{BD} and \cite{BR} for a modern perspective and stronger results. The case $n=3$ (that is, $d=26$) has been treated in our paper
\cite{FV} as well as in \cite{RS}, whereas this paper is devoted to the case $n=4$ (that is, $d=42$).

\vskip 3pt

For $d=42$, Hassett \cite{H}
constructed a degree $2$ map
$$\varphi\colon \F_{22}\longrightarrow \cC_{42}, \ \varphi\bigl([S,H]\bigr)=[X],$$
such that the isomorphism (\ref{fanolines}) holds. Note that $\varphi$ is defined at the level of moduli spaces of weight $2$ Hodge structures and there is no direct geometric construction of the cubic fourfold one associates to a $K3$ surface of genus $22$. Since $\mbox{deg}(\varphi)=2$, it follows that for a general $[X]\in \cC_{42}$ there exist two distinct polarized $K3$ surfaces $[S,H]$ and $[S',H']$ such that
$$S^{[2]}\cong S'^{[2]}\cong F(X).$$
Clarifying the relation between $S$ and $S'$ is essential in order to prove Theorem \ref{isom}.

\vskip 4pt

\subsection{Hilbert squares of $K3$ surfaces.}
Let $(S,H)$ be a $K3$ surface with $\mbox{Pic}(S)=\mathbb Z\cdot H$ and $H^2=2g-2$. We denote by $S^{[2]}$ the Hilbert scheme of length two zero-dimensional subschemes on $S$.
Then $H^2(S^{[2]}, \mathbb Z)$ is endowed with the \emph{Beauville-Bogomolov} quadratic form $q$. Let $\Delta\subseteq S^{[2]}$ be the divisor consisting of
zero-dimensional subschemes  supported only at a single point and denote by $\delta:=\frac{[\Delta]}{2}\in H^2(S^{[2]},\mathbb Z)$ the reduced diagonal class. Then $q(\delta, \delta)=-2$. Moreover
$$\Delta=\PP(T_S)=\bigcup_{p\in S} \Delta_p,$$  where $\Delta_p$ is the rational curve consisting of those $0$-dimensional subschemes $\xi\in \Delta$ such that
$\mbox{supp}(\xi)=\{p\}$. We set $\delta_p:=[\Delta_p]\in H_2(S^{[2]}, \mathbb Z)$.

\vskip 4pt

For any curve $C\in |H|$, we introduce the divisor
$$f_C:=\bigl\{\xi\in S^{[2]}:\mbox{supp}(\xi)\cap C\neq \emptyset\bigr\}$$
and set $f:=[f_C]\in H^2\bigl(S^{[2]},\mathbb Z\bigr)$.
For a point $p\in S$, we also define the curve
$$F_p:=\bigl\{\xi=p+x\in S^{[2]}:x\in C\bigr\}$$
and set $f_p:=[F_p]\in H_2(S^{[2]},\mathbb Z)$.
The Beauville-Bogomolov form can be extended to a quadratic form on  $H_2(S^{[2]}, \mathbb Z)$, by
setting $q(\alpha, \alpha):=q(w_{\alpha}, w_{\alpha})$, with $w_{\alpha}\in H^2(S^{[2]},\mathbb Z)$ being the class characterized by
the property $\alpha\cdot u=q(w_{\alpha},u)$, for every $u\in H^2(S^{[2]},\mathbb Z)$. Here $\alpha \cdot u$ denotes the intersection product.

\vskip 4pt

One has the following decompositions, orthogonal with respect to $q$, both for the Picard group and for the group $N_1(S^{[2]},\mathbb Z)$ of $1$-cycles modulo numerical equivalence:
$$\mbox{Pic}(S^{[2]})\cong \mathbb Z\cdot f\oplus \mathbb Z\cdot \delta \  \mbox{ and } \ N_1(S^{[2]},\mathbb Z)\cong \mathbb Z\cdot f_p \oplus \mathbb Z\cdot  \delta_p.$$
We record the following more or less immediate relations:
\begin{equation}\label{intprod}
f\cdot f_p=2g-2, \ \delta\cdot \delta_p=-1, \ f\cdot \delta_p=0 \mbox{ and } \delta\cdot f_p=0.
\end{equation}
The form $q$ takes the following values on $H_2\bigl(S^{[2]},\mathbb Z\bigr)$:
$$q(f_p, f_p)=2g-2, \ \ q(f_p, \delta_p)=0, \ \ q(\delta_p, \delta_p)=-\frac{1}{2}.$$
Thus $q(af_p-b\delta_p)=a^2(2g-2)-\frac{b^2}{2}$, for $a,b\in \mathbb Z$.

\vskip 4pt

It follows from \cite[Proposition 13.1]{BM} (see also \cite[Proposition 3.14]{DM} for this formulation) that for a polarized $K3$ surface $[S,H]\in \F_{22}$ with $\mbox{Pic}(S)=\mathbb Z\cdot H$, the nef cone
$\mbox{Nef}(S^{[2]})$ equals the movable cone $\mbox{Mov}(S^{[2]})$ and it is generated by the rays $f$ and $55f-252\delta$ respectively. Using the terminology of \cite{H}, the Hilbert square $S^{[2]}$ is \emph{strongly ambiguous}, that is, there exists another $K3$ surface $S'$ such that there exists an isomorphism $r\colon S^{[2]}\stackrel{\cong}\rightarrow S'^{[2]}$ which is \emph{not induced} by an automorphism $S\stackrel{\cong}
\rightarrow S'$. This implies $r^*(\delta')\neq \delta$ and then necessarily, the map $r^*\colon H^2\bigl(S'^{[2]}, \mathbb Z\bigr)\rightarrow H^2\bigl(S^{[2]}, \mathbb Z\bigr)$ interchanges the two rays of the respective nef cones, that is,
$$r^*(f')=55f-252\delta, \ \mbox{ } \ \ r^*(55f'-252\delta')=f.$$
Then also $r_*(f)=55f'-252\delta'$ and $r_*(\delta)=12f'-55\delta'$, from which we obtain the following relations at the level of the cone of curves in $S^{[2]}$ and $S'^{[2]}$ respectively:

\begin{equation}\label{eqcurv}
r^*(\delta_{p'}')=6f_p-55\delta_p, \ \ \mbox{  } \ r^*(6f_{p'}'-55\delta_{p'}')=\delta_p.
\end{equation}

\vskip 3pt

\subsection{Scrolls contained in special cubic fourfolds.}
Suppose $R\subseteq X\subseteq \PP^5$ is a rational scroll with smooth normalization having only isolated singularities and which is contained in a cubic fourfold $X$. The \emph{double point formula} \cite[Theorem 9.3]{Ful} gives the number $D(R)$ of singularities of $R$, counted appropriately:
\begin{equation}\label{doublepoints}
2D(R)=R^2-6h^2-K_R^2-3h\cdot K_R+\chi_{\mathrm{top}}(R).
\end{equation}
If moreover all singularities of $R$ are nodal, then $D(R)$ equals the number of nodes of $R$.

\vskip 3pt

When $[X]\in \cC_{42}$, assuming that $A(X)=\langle h^2,[R]\rangle$, where $h^2\cdot [R]=\mbox{deg}(R)=9$, necessarily $R^2=41$. Since $\chi_{\mathrm{top}}(R)=4$ and $h\cdot K_R=-11$, from
(\ref{doublepoints}), we compute $D(R)=8$. Therefore if $R$ has only isolated nodes, then it is necessarily $8$-nodal.

\begin{proposition}\label{scrolls1}
Suppose $[S,H]\in \F_{22}$ is an element such that $\mathrm{Pic}(S)=\mathbb Z\cdot H$ and let $Z\subseteq S^{[2]}$ be an effective $1$-cycle of degree $9$ with respect to the Pl\"ucker embedding. Then $[Z]= f_p$ or $[Z]=6f_p-55\delta_p$. In the first case $Z=\Delta_p$ for some point $p\in S$, and in the second case $r(Z)=\Delta_{p'}$ for some point $p'\in S$, where $r:S^{[2]}\stackrel{\cong}\rightarrow S'^{[2]}$.
\end{proposition}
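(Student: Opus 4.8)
The plan is to pin down $[Z]\in N_1(S^{[2]},\mathbb Z)=\mathbb Z f_p\oplus\mathbb Z\delta_p$ by a purely numerical reduction and then to identify the cycle geometrically. First I would record the Pl\"ucker polarization $g\in\mathrm{Pic}(S^{[2]})$ of the embedding $S^{[2]}\cong F(X)\subseteq\GG(1,5)$ from (\ref{fanolines}). Writing $g=af-b\delta$, the relation $\deg\Delta_p=g\cdot\delta_p=b$ together with $\deg\Delta_p=9$ forces $b=9$, while the fact that the Pl\"ucker class on the Fano variety of a cubic fourfold has Beauville--Bogomolov square $q(g,g)=6$ gives $42a^2-2\cdot 81=6$, hence $a=2$ and $g=2f-9\delta$. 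Using the intersection table (\ref{intprod}) one then computes, for an arbitrary class $[Z]=mf_p+n\delta_p$, the Pl\"ucker degree $g\cdot Z=84m+9n$, so the condition $\deg Z=9$ becomes $28m+3n=3$.

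Next I would impose effectivity via the cone of curves. Since $\mathrm{Nef}(S^{[2]})=\mathrm{Mov}(S^{[2]})$ is generated by $f$ and $55f-252\delta$, its dual $\overline{NE}(S^{[2]})$ is cut out by $Z\cdot f=42m\geq 0$ and $Z\cdot(55f-252\delta)=2310m+252n\geq 0$, i.e.\ by $m\geq 0$ and $55m+6n\geq 0$; the two extremal rays are spanned by $\delta_p=[\Delta_p]$ and by $6f_p-55\delta_p$, and a direct check shows both have Pl\"ucker degree $9$. Intersecting $28m+3n=3$ with this cone and with the lattice $\mathbb Z f_p\oplus\mathbb Z\delta_p$, the only integral solutions are $(m,n)=(0,1),\,(3,-27),\,(6,-55)$, that is $[Z]\in\{\delta_p,\ 3f_p-27\delta_p,\ 6f_p-55\delta_p\}$: the two extremal generators together with the integral midpoint of the segment joining them.

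The heart of the argument is to discard the middle class $3f_p-27\delta_p=3(f_p-9\delta_p)$. Since every class has Pl\"ucker degree divisible by $3$ and the only classes of $\overline{NE}$ of degree $3$ and $6$ are $v_0:=f_p-9\delta_p$ and $2v_0$, any effective cycle of class $3f_p-27\delta_p$ must contain an irreducible curve of class $kv_0$ for some $k\in\{1,2,3\}$. I would rule these out by the double-point formula (\ref{doublepoints}): a rational curve of class $kv_0$ in $F(X)$ sweeps out a degree-$3k$ scroll $R\subseteq X$ whose invariants are dictated by $kv_0$ exactly as in Lai \cite{L}, and the computation shows $R$ would carry a negative number of double points (equivalently, for general $[X]\in\cC_{42}$ no such scroll is compatible with $A(X)=\langle h^2,[R]\rangle$). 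This is the step I expect to be the main obstacle, precisely because it cannot follow from the cone considerations above: interior classes of \emph{large} degree are effective, so the non-effectivity of $v_0,2v_0,3v_0$ is a genuinely geometric, low-degree exclusion.

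Finally I would identify the two surviving cycles. The class $\delta_p$ lies on the ray annihilated by the nef class $f$, whose associated morphism factors through the Hilbert--Chow map $S^{[2]}\to S^{(2)}$; its only positive-dimensional fibres are the rational curves $\Delta_p$, so an effective cycle of class $\delta_p$ and degree $9=\deg\Delta_p$ is a single $\Delta_p$, giving the first case. For $[Z]=6f_p-55\delta_p$, which is annihilated by the second nef generator $55f-252\delta$, I would transport through the isomorphism $r\colon S^{[2]}\xrightarrow{\ \cong\ }S'^{[2]}$ of \cite{H}: by (\ref{eqcurv}) one has $6f_p-55\delta_p=r^*(\delta_{p'}')$, so $r(Z)$ is an effective cycle of class $\delta_{p'}'$ and Pl\"ucker degree $9$ on $S'^{[2]}$, whence the previous case yields $r(Z)=\Delta_{p'}$ for some $p'\in S'$. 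This completes the identification.
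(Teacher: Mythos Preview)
Your framework is essentially the paper's: identify the Pl\"ucker class $g=2f-9\delta$, list the three lattice points of degree $9$ in the Mori cone, exclude the middle one, and then identify the two survivors geometrically. Your use of the dual of $\mathrm{Nef}(S^{[2]})$ to pin down $(m,n)\in\{(0,1),(3,-27),(6,-55)\}$ is correct and is in fact a cleaner substitute for the paper's appeal to the Bayer--Macr\`\i\ bound $q(Z,Z)\geq -\tfrac{5}{2}$; both routes give the same three classes. The endgame (Hilbert--Chow contraction for $\delta_p$, then transport via $r$ for $6f_p-55\delta_p$) is also equivalent to the paper's argument using $[Z]\cdot\delta=-1$.

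The genuine gap is your exclusion of $3v_0=3f_p-27\delta_p$. Your decomposition argument is fine: any effective cycle of this class must contain an irreducible component of class $kv_0$ with $k\in\{1,2,3\}$. But the double--point formula does \emph{not} dispose of $k=3$. With $[R_{kv_0}]=k\,h^2$ (so $R^2=3k^2$), degree $3k$, $K_R^2=8$, $h\cdot K_R=-(3k+2)$, $\chi_{\mathrm{top}}=4$, one gets
\[
2D(R)=3k^2-18k-8+3(3k+2)+4=3k^2-9k+2,
\]
which is $-4$ for $k=1,2$ but $+2$ for $k=3$. So an irreducible curve of class $3v_0$ would sweep out a $1$--nodal nonic scroll homologous to $3h^2$, and nothing you wrote forbids this. (Two further issues: you assume the component is rational, which is not given; and your parenthetical ``$A(X)=\langle h^2,[R]\rangle$'' is not a contradiction either---for $[R]=3h^2$ it simply says this scroll does not witness the speciality of $X$.)

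The paper handles this step differently and more directly: it invokes the Hassett--Tschinkel identity
\[
R^2=\frac{(Z\cdot g)^2}{2}-q(Z,Z)
\]
from \cite[\S 7.1]{HT}. For $[Z]=3f_p-27\delta_p$ one has $q(Z,Z)=\tfrac{27}{2}$, hence $R^2=\tfrac{81}{2}-\tfrac{27}{2}=27$, whereas the discriminant--$42$ lattice forces $R^2=41$ for any degree--$9$ surface class spanning $A(X)$ together with $h^2$. This single lattice comparison replaces your attempted case analysis over $k$; you should use it (and, if you want a fully self--contained argument, spell out why the cylinder image of $[Z]$ cannot be $3h^2$).
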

\begin{proof}
Assume that $Z$ is an effective $1$-cycle on $S^{[2]}$ and write $[Z]=af_p-b\delta_p\in N_1\bigl(S^{[2]},\mathbb Z\bigr)$. Let $\gamma_S$ denote the class of the Pl\"ucker line bundle $\OO_{S^{[2]}}(1)$ with respect to the isomorphism $S^{[2]}\cong F(X)$. Since $q(\gamma_S, \gamma_S)=6$, one obtains
$$\gamma_S=2f-9\delta\in H^2\bigl(S^{[2]},\mathbb Z\bigr).$$
Therefore $9=Z\cdot \gamma_S=
(af_p-bf_p)(2f-9\delta)=84a-9b$, hence we can write $a=3a_1$, with $a_1\in \mathbb Z$, in which case $b=28a_1-1$. Using \cite[Proposition 12.6]{BM}, if $Z$ is effective we also have the inequality $q(Z,Z)\geq -\frac{5}{2}$, implying $7a_1^2-14a_1-1\leq 0$.

\vskip 3pt

The integer solutions of this inequality are $a_1=0$, when $[Z]=\delta_p$, $a_1=2$, in which case $[Z]=6f_p-55\delta_p$, and finally $a_1=1$. Note that in the first two cases $q(Z,Z)=-\frac{1}{2}$. On the other hand, $a_1=1$ implies $[Z]=3f_p-27\delta_p$, yielding
$q(Z,Z)=\frac{27}{2}$. But this is incompatible with the double point formula. Indeed, if $R\subseteq X$ is the scroll associated to the curve $Z$ under the isomorphism $S^{[2]}\cong F(X)$, then following \cite[§ 7.1]{HT}, we have
$$R^2=\frac{(Z\cdot \gamma_S)^2}{2}-q(Z,Z),$$
which is impossible because $R^2=41$.

\vskip 3pt

In the case $[Z]=6f_p-55\delta_p$, denoting by $[S', H']\in \F_{22}$ the polarized $K3$ surface such that
$r\colon S^{[2]}\stackrel{\cong}\rightarrow S'^{[2]}\cong F(X)$, it follows from (\ref{eqcurv}) that $[r_*(Z)]=\delta_{p'}'$.
By possibly replacing $S$ with $S'$, we may assume $[Z]=\delta_p$. We claim this implies $Z$ is one of the smooth rational curves
$\Delta_p$, for some point $p\in S$.

\vskip 3pt

Indeed, from $[Z]\cdot \delta=-1$, it follows that $Z\subseteq \Delta$. Moreover,  $Z$ lies in one of the fibers of the $\PP^1$-bundle $\pi\colon \Delta=\PP(T_S)\rightarrow S$. This implies that $Z=\Delta_p$, for some $p\in S$, because  otherwise
$\pi(Z)\equiv mH$ for some integer $m>0$ and then
$$mH^2=Z\cdot \pi^{-1}(H)=Z\cdot f=\delta_p\cdot f=0,$$
which is a contradiction.
\end{proof}

\vskip 3pt

We are now in a position to prove Theorem \ref{isom}. Recall the definition given in the Introduction of the parameter space $\mathfrak{X}$
of pairs $(X,R)$, where $[X]\in \cC_{42}$ and $R\subseteq X$ is a degree $9$ scroll. As explained, as long as $R$ has isolated nodal singularities, $R$ has precisely $8$ nodes. We define the map $ \tilde{\varphi}\colon \F_{22,1}\rightarrow \mathfrak{X}$ given by $\tilde{\varphi}([S,H,p]):=[X, R_p]$, where the cubic scroll $X$ is determined by the isomorphism $F(X)\cong S^{[2]}$ and the scroll $R_p\subseteq X$ corresponds to $\Delta_p$, viewed as a rational curve inside $F(X)$. Recall that it is proved in \cite{L} that the projection $\pi_1\colon \mathfrak{X}\rightarrow \cC_{42}$ is dominant. This implies that $\tilde{\varphi}$ is well-defined.

\vskip 5pt

\noindent {\emph{Proof of Theorem \ref{isom}.} We show that $\tilde{\varphi}\colon \F_{22,1}\rightarrow \mathfrak{X}$ is a birational isomorphism by constructing its inverse. Start with an element $[X,R]\in \mathfrak{X}$ and we denote by $\bigl\{[S,H],[S',H']\bigr\}=\varphi^{-1}([X])$ the two polarized $K3$ surfaces realizing the isomorphism (\ref{fanolines}).  Applying Proposition \ref{scrolls1}, for precisely one element of $\varphi^{-1}([X])$, say $[S,H]$ we have that the curve $Z=Z_R\subseteq F(X)\cong S^{[2]}$ of rulings of $R$ has class $[Z]=\delta_p$, for a point $p\in S$. Then clearly $\tilde{\varphi}^{-1}([X,R])=[S,H,p]$.
\hfill $\Box$

\section{Moduli of nodal hyperelliptic curves}

On our way towards establishing Theorems \ref{unir} and \ref{hyp_uni} and ultimately proving Theorem \ref{main}, we shall reverse the construction described in the Introduction associating to a suitably general scroll $[R]\in \hh$ an $8$-nodal curve with hyperelliptic  normalization. In order to establish the various transversality claims mentioned in the Introduction, we find it easier to start with a suitable nodal hyperelliptic curve and bring the degree $9$ scroll into picture only later. We begin therefore by introducing and studying various moduli spaces of curves that will turn out to be relevant when dealing with $\cC_{42}$.

\vskip 2pt

Recall that for an irreducible nodal curve $Y$, we denote by $W^r_d(Y)$ the Brill-Noether locus consisting of line bundles $L\in \mbox{Pic}^d(Y)$ satisfying $h^0(Y, L)\geq r+1$.

\begin{definition}
We denote by $\mathfrak{Hyp}_{4,8}$ the moduli space of pairs $[\Gamma, L]$, where $\Gamma$ is an irreducible $8$-nodal curve of arithmetic genus $12$, such that its normalization $C\rightarrow \Gamma$ is hyperelliptic and $L\in W^2_8(\Gamma)$.
\end{definition}

In this Section we provide an explicit parametrization of $\mathfrak{Hyp}_{4,8}$ and conclude that this space is unirational. We begin with some preparation. We consider the Hirzebruch surface $\FF_1:=\mathrm{Bl}_{\mathpzc{o}}(\PP^2)$ viewed  as a cubic scroll via the embedding
\begin{equation}\label{cubic_scroll}
\phi_{|2h-E|}\colon \FF_1\hookrightarrow Z\subseteq \PP^4.
\end{equation}
Here $h$ denotes the pull-back of the line class under the contraction morphism $\FF_1\rightarrow \PP^2$, whereas $E$ is the exceptional divisor over the point $\mathpzc{o}\in \PP^2$.

\vskip 3pt

We denote by $\mathcal{H}_4$ the moduli space of smooth hyperelliptic curves of genus $4$ and by $\mathcal{P}ic^8_{\mathcal{H}_4}\rightarrow \H_4$ the universal Picard variety of pairs $[C,L]$, where $[C]\in \mathcal{H}_4$ and $L\in \mbox{Pic}^8(C)$. Our next result provides an explicit birational realization of this universal Picard variety.

\begin{proposition}\label{pic:hyp}
There is a birational isomorphism $\mathcal{P}ic^8_{\H_4}\stackrel{\cong}\dashrightarrow \bigl|6h-4E\bigr|/\mathrm{Aut}(\mathrm{\textbf{F}}_1)$.
\end{proposition}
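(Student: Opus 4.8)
The plan is to write down the map explicitly and verify it is birational by matching dimensions and establishing generic injectivity, recovering the inverse from intrinsic data. I would begin with the dimension count. A curve $C\in|6h-4E|$ on $\FF_1=\mathrm{Bl}_{\mathpzc{o}}(\PP^2)$ is the proper transform of a plane sextic with a quadruple point at $\mathpzc{o}$, so $h^0(\FF_1,6h-4E)=\binom{8}{2}-\binom{5}{2}=18$ and $\dim|6h-4E|=17$. Since $\Aut(\FF_1)$ is the stabilizer of $\mathpzc{o}$ inside $\PGL(3)$, it has dimension $8-2=6$, giving $\dim\bigl(|6h-4E|/\Aut(\FF_1)\bigr)=11$. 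On the other side $\dim\H_4=2\cdot 4-1=7$ and the Picard fibre adds $g=4$, so $\dim\mathcal{P}ic^8_{\H_4}=11$ as well.

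Next I would define the forward map. From $K_{\FF_1}=-3h+E$, adjunction gives $2g(C)-2=(6h-4E)\cdot(3h-3E)=6$, so a smooth member has genus $4$; and $(6h-4E)\cdot(h-E)=2$ shows that the ruling pencil $|h-E|$ cuts out a $g^1_2$, so $C$ is hyperelliptic with $\OO_C(h-E)\in W^1_2(C)$. Writing $6h-4E=4(h-E)+2h$ exhibits the class as base-point-free and big, so Bertini guarantees the general member is smooth and irreducible. I would then set
$$\Psi\colon |6h-4E|/\Aut(\FF_1)\dashrightarrow \mathcal{P}ic^8_{\H_4},\qquad \Psi([C])=\bigl[C,\OO_C(2h-E)\bigr],$$
noting that $\deg\OO_C(2h-E)=(6h-4E)\cdot(2h-E)=8$, that $\OO_C(2h-E)$ is the restriction to $C$ of the hyperplane class of the cubic scroll $Z\subseteq\PP^4$ from (\ref{cubic_scroll}), and that isomorphic $\FF_1$-embeddings give isomorphic pairs, so $\Psi$ descends to the quotient.

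The heart of the argument is an intrinsic reconstruction yielding generic injectivity. Given $[C,L]$ in the image, the hyperelliptic pencil $A:=\OO_C(h-E)$ is the \emph{unique} $g^1_2$ on the hyperelliptic curve $C$, hence is recovered from $C$ alone; then $\OO_C(h)=L\otimes A^{\vee}$ and $\OO_C(E)=L\otimes A^{\otimes -2}$ are determined by $L$. The restriction $H^0(\FF_1,h)\to H^0(C,\OO_C(h))$ is an isomorphism: it is injective because $H^0(\FF_1,-5h+4E)=0$, and both spaces have dimension $3$ by Riemann--Roch, so an injection between them is an isomorphism. Thus the blow-down model of $C$ is exactly the plane sextic $\overline{C}\subseteq\PP^2$ cut by the complete system $|L\otimes A^{\vee}|$. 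The four points of $C\cap E$ collapse to a singular point of multiplicity four, and an irreducible sextic (arithmetic genus $10$) can carry at most one quadruple point, since two would force a drop in genus by at least $2\binom{4}{2}=12$; hence the quadruple point $\mathpzc{o}$ is canonically attached to $(C,L)$. Consequently any isomorphism of pairs carries $A'\mapsto A''$ and $\OO(h),\OO(E)$ accordingly, induces a projectivity matching the two plane models together with their unique quadruple points, i.e. an element of $\Aut(\FF_1)$ identifying the two classes in $|6h-4E|/\Aut(\FF_1)$. Therefore the general fibre of $\Psi$ is a single point.

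Finally, generic injectivity forces $\dim\overline{\mathrm{Im}(\Psi)}=11=\dim\mathcal{P}ic^8_{\H_4}$, so $\Psi$ is dominant onto the irreducible target, hence dominant and generically injective, hence birational in characteristic zero. The step I expect to be the main obstacle is the reconstruction: checking that for a general pair the system $|L\otimes A^{\vee}|$ is base-point-free and birational onto a plane sextic whose only imposed singularity is the quadruple point $\mathpzc{o}$, so that $C$ is its proper transform on $\FF_1$, together with the verification of the $\Aut(\FF_1)$-descent; the remaining assertions reduce to bookkeeping with intersection numbers on $\FF_1$.
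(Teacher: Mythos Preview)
Your proposal is correct and takes essentially the same approach as the paper: both define the forward map $C\mapsto[C,\OO_C(2h-E)]$ and both recover the inverse by noting that the hyperelliptic pencil $A$ is intrinsic, so $\OO_C(h)=L\otimes A^{\vee}$ gives the plane sextic model and $L\otimes A^{-2}$ singles out the quadruple point $\mathpzc{o}$. The only organizational difference is that the paper constructs the inverse directly from a general point of $\mathcal{P}ic^8_{\H_4}$ (verifying along the way that $|L\otimes A^{\vee}|$ is base-point-free via the open condition $L\notin\omega_C\otimes A+C-C$, and that the plane image has no singularities beyond the quadruple point by a genus comparison), whereas you argue generic injectivity on the image and then invoke the dimension count plus irreducibility of the target to deduce dominance; the underlying geometric content is the same.
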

\begin{proof}
A smooth curve $C\in |6h-4E|$ is hyperelliptic of genus $4$. To it we can associate the pair $[C,L]$, where $L:=\OO_C(2h-E)$ is a line bundle of degree $8$. In other words, $L=\OO_C(1)$, where $C\subseteq Z\subseteq \PP^4$ is viewed as an octic curve. This construction is obviously $\mbox{Aut}(\FF_1)$-invariant, hence it gives rise to a map $\bigl|6h-4E\bigr|/\mbox{Aut}(\FF_1)\dashrightarrow \mathcal{P}ic^8_{\H_4}$.

\vskip 4pt

Conversely, we start with a general line bundle $L\in \mbox{Pic}^8(C)$ on a hyperelliptic curve $C$ of genus $4$. We denote by $A\in W^1_2(C)$ the hyperelliptic pencil. We may assume $L$ does not lie in the translate  $\omega_C\otimes A+C-C\subseteq \mbox{Pic}^8(C)$ of the difference variety $C-C\subseteq \mbox{Pic}^0(C)$. Set $\OO_C(h):=L\otimes A^{\vee}\in \mbox{Pic}^6(C)$. Then $h^0(C,\OO_C(h))=3$ and our assumption on $L$ guarantees an induced regular map $\phi_{|h|}\colon C\rightarrow \PP^2$, whose image is a sextic curve $C'\subseteq \PP^2$. Set
$$N:=\OO_C(h)\otimes A^{\vee}=L\otimes A^{-2}.$$ For a general $L\in \mbox{Pic}^8(C)$ we have $h^0(C, N)=1$ and we write $N=\OO_C(x_1+x_2+x_3+x_4)$, for points $x_i\in C$. By choosing $L$ generally in $\mbox{Pic}^8(C)$ we can arrange that the points $x_i$ are distinct. Then $$h^0\bigl(C, \OO_C(h)(-x_1-x_2-x_3-x_4)\bigr)=h^0(C, A)=2,$$ which is to say that the image $C':=\phi_{|h|}(C)$ has a $4$-fold point at $\mathpzc{o}:=\phi_{|h|}(x_i)$ for $i=1, \ldots, 4$. Comparing the genera of $C$ and $C'$ we see that $C'$ has no further singularities. This implies we can embed $C$ in the blown-up surface $\mathrm{Bl}_{\mathpzc{o}}(\PP^2)$ such that $C\in |6h-4E|$. Since $A=\OO_C(h-E)$, we also obtain $L=\OO_C(h)\otimes A=\OO_C(2h-E)$, thus finishing the proof.
\end{proof}

In our study of the moduli space $\cC_{42}$ via nodal scrolls, a special role is played by a certain degree $7$ rational surface in $\PP^5$. In what follows, we summarize its properties. If $\mathpzc{o}_1, \ldots, \mathpzc{o}_n\in \PP^2$ are distinct points, we denote by $\mbox{Bl}_n(\PP^2):=\mbox{Bl}_{\mathpzc{o}_1, \ldots, \mathpzc{o}_n}(\PP^2)$ their blow-up, by $E_i$ the exceptional divisor over $\mathpzc{o}_i$, and by $h$ the pull-back of the line class under the contraction morphism $\mbox{Bl}_n(\PP^2)\rightarrow \PP^2$.

\begin{proposition}\label{prop:T}
Let $\mathpzc{o}_1, \ldots, \mathpzc{o}_9\in \PP^2$ be points lying on a \emph{unique smooth} cubic curve. Then the linear system $|4h-E_1-\cdots -E_9|$ is very ample on $\mathrm{Bl}_9(\PP^2)$ and  the image $T$ of the embedding $$\phi_{|4h-E_1-\cdots-E_9|}\colon \mathrm{Bl}_{9}(\PP^2)\hookrightarrow \PP^5$$ is projectively normal. In particular $h^0\bigl(\PP^5, \mathcal{I}_{T/\PP^5}(2)\bigr)=3$. Furthermore, $\mathrm{Bs}\ \bigl|\mathcal{I}_{T/\PP^5}(2)\bigr|=T\cup \Pi$, where $\Pi$ is a $2$-plane meeting $T$ along a smooth elliptic curve.
\end{proposition}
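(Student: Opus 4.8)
The plan is to realize $T$ as an adjoint embedding of $X:=\mathrm{Bl}_9(\PP^2)$ and to control everything through the unique anticanonical curve. Write $D:=4h-E_1-\cdots-E_9$, so that $K_X=-3h+E_1+\cdots+E_9$ and $D=K_X+L$ with $L:=7h-2(E_1+\cdots+E_9)$. Since the nine points lie on a \emph{unique} smooth cubic, $h^0(X,-K_X)=h^0(\PP^2,\mathcal{I}_{\{\mathpzc{o}_i\}}(3))=1$, so $|-K_X|=\{\widetilde{E}\}$ consists of a single smooth elliptic curve $\widetilde{E}\in|3h-E_1-\cdots-E_9|$ with $\widetilde{E}^2=0$; in particular $-K_X=\widetilde{E}$ is nef. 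First I would record the numerics. Multiplying quartics by the cubic equation gives $0\to\OO_{\PP^2}(1)\to\mathcal{I}_{\{\mathpzc{o}_i\}}(4)\to\OO_{\widetilde{E}}(4)(-\textstyle\sum\mathpzc{o}_i)\to 0$, whose degree-$3$ quotient on the elliptic curve has $h^0=3$, yielding $h^0(X,D)=6$ and the map $\phi_D\colon X\to\PP^5$. A Riemann--Roch computation gives $\chi(2D)=1+\tfrac12(2D)\cdot(2D-K_X)=18$, and writing $kD=K_X+\bigl((k+1)(-K_X)+kh\bigr)$ with the second summand nef and big shows, via Kawamata--Viehweg vanishing, that $h^1(X,kD)=0$ for all $k\geq 0$; hence $h^0(X,2D)=18$.

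For very ampleness I would apply Reider's theorem to $D=K_X+L$. Here $L=2\widetilde{E}+h$ is a sum of nef classes, hence nef, with $L^2=13\geq 10$; in fact $L$ is ample by Nakai--Moishezon, since an irreducible curve $C$ with $L\cdot C=0$ would satisfy $h\cdot C=0$ and $\widetilde{E}\cdot C=0$, impossible as $\widetilde{E}\cdot E_i=1$. It then remains to exclude Reider's exceptional divisors, i.e. effective $D'$ with $L\cdot D'\leq 2$ of forbidden self-intersection. Writing $L\cdot C=2\,\widetilde{E}\cdot C+h\cdot C$ forces $h\cdot C\leq 2$, and a short case analysis over lines and conics through the points --- using that the nine points are \emph{general} on the cubic, so that no three are collinear and no six lie on a conic --- shows that the only irreducible curves with $L\cdot C\leq 2$ are the $E_i$, for which $(L\cdot E_i,E_i^2)=(2,-1)$ is not of Reider's forbidden type. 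Hence $D$ is very ample.

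Next I would establish projective normality by the standard restriction/bootstrap argument. A general $C\in|D|$ is a smooth curve with $C^2=7$ and $C\cdot K_X=-3$, so $g(C)=3$, and $D|_C$ has degree $C^2=7=2g(C)+1$, hence is normally generated on $C$. Because $h^1(X,kD)=0$ for all $k\geq 0$, every restriction $H^0(X,kD)\to H^0(C,kD|_C)$ is surjective, and the usual induction lifts normal generation from $C$ to $X$, so $T$ is projectively normal. In particular $H^0\bigl(\PP^5,\OO(2)\bigr)\to H^0(T,\OO_T(2))=H^0(X,2D)$ is surjective, whence $h^0\bigl(\PP^5,\mathcal{I}_{T/\PP^5}(2)\bigr)=21-18=3$.

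Finally I would identify the base locus. The restriction $\phi_D|_{\widetilde{E}}$ is the complete degree-$3$ system on $\widetilde{E}$ (since $D-\widetilde{E}=h$ and $h^1(X,h)=0$ give a surjection onto $H^0(\widetilde{E},D|_{\widetilde{E}})$), so $\phi_D(\widetilde{E})$ is a smooth plane cubic spanning a $2$-plane $\Pi$; moreover $\phi_D^{-1}(\Pi)=\widetilde{E}$, because the hyperplanes through $\Pi$ pull back precisely to the base-point-free system $|h|$, so $\Pi\cap T$ is exactly the smooth elliptic curve $\phi_D(\widetilde{E})$. Every quadric $Q\supseteq T$ then contains this cubic; since an irreducible plane cubic lies on no conic, $Q|_\Pi\equiv 0$, i.e. $Q\supseteq\Pi$, giving $\mathrm{Bs}\,|\mathcal{I}_{T/\PP^5}(2)|\supseteq T\cup\Pi$. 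To get equality I would show the three quadrics form a regular sequence, so their intersection is a complete-intersection surface of degree $2^3=8=\deg(T\cup\Pi)$ and hence equals $T\cup\Pi$; a reassuring consistency check is that $\omega_{T\cup\Pi}$ restricts to $\OO$ on each component ($K_X+\widetilde{E}=\OO_X$ on $T$ and $\OO_{\PP^2}(-3+3)=\OO$ on $\Pi$), matching the trivial canonical class of a $(2,2,2)$ complete intersection. The two steps needing genuine care are the generality hypotheses in the Reider analysis and --- the point I expect to be hardest --- verifying that the three quadrics meet properly, which is what upgrades the inclusion $\mathrm{Bs}\supseteq T\cup\Pi$ to an equality; this can be checked, for instance, by restricting to a general $\PP^3$ and counting the $8$ intersection points.
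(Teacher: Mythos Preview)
Your argument for projective normality is essentially the same hyperplane-section bootstrap the paper uses: restrict to a general $C\in|D|$, a smooth non-hyperelliptic genus $3$ curve with $\deg\OO_C(1)=7=2g+1$, invoke normal generation for $C$, then lift using $h^1(X,kD)=0$. For very ampleness the paper simply cites a theorem of Coppens, whereas you run Reider directly; that is fine in spirit, but note that your case analysis invokes ``no three collinear, no six on a conic'', which is \emph{not} part of the stated hypothesis (nine points on a \emph{unique smooth} cubic can have three collinear). In the paper's applications the points are general so this is harmless, but as a proof of the proposition as written it is a small gap.

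The real divergence is in the base locus. You correctly get $T\cup\Pi\subseteq\mathrm{Bs}\,|\mathcal{I}_{T/\PP^5}(2)|$ and then propose to close up by showing the three quadrics form a regular sequence, so that the intersection is a degree $8$ surface forced to equal $T\cup\Pi$. You flag this as the hard step and leave it essentially unproved; ``restrict to a general $\PP^3$ and count $8$ points'' presupposes exactly what is at issue, namely that the base locus has no component of dimension $\geq 3$. The paper's argument here is worth knowing because it finishes this cleanly without ever proving regularity directly: pick $\mathpzc{r}\in\mathrm{Bs}\setminus(T\cup\Pi)$ and a general hyperplane $H\ni\mathpzc{r}$; then $C:=T\cap H$ is smooth of genus $3$, $\ell:=\Pi\cap H$ is a line meeting $C$ in the three points $J\cdot H$, and $C\cup\ell\subseteq H\cong\PP^4$ is a canonically embedded stable curve of genus $5$. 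Since $h^0(C,\OO_C(\mathpzc{r}_1+\mathpzc{r}_2+\mathpzc{r}_3))=h^0(T,\OO_T(J))=1$ (uniqueness of the cubic), $C\cup\ell$ is not trigonal, hence by Petri it is scheme-theoretically cut out by quadrics; as $H^0(\mathcal{I}_{T/\PP^5}(2))\to H^0(\mathcal{I}_{C\cup\ell/H}(2))$ is an isomorphism of $3$-dimensional spaces, $\mathpzc{r}\in C\cup\ell\subseteq T\cup\Pi$, a contradiction. This is both shorter and avoids the regularity verification you were worried about.
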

\begin{proof}
The fact that the linear system $|4h-E_1-\cdots-E_9|$ is very ample on the blow-up $\mbox{Bl}_9(\PP^2)$ follows from \cite[Theorem 2.2]{Co}.
Let $C\in |\OO_T(1)|$ be a general hyperplane section on $T$. Then $C$ is a smooth non-hyperelliptic curve of genus $3$. Since $\mbox{deg}(\OO_C(1))=7$, using for instance \cite[page 55]{Mu}, it follows that the curve $\phi_{|\OO_C(1)|}\colon C \hookrightarrow \PP^4$ is projectively normal. Since $h^0(C,\OO_C(2))=12$, we find  $h^0\bigl(\PP^4, \mathcal{I}_{C/\PP^4}(2)\bigr)=3$. Denoting by $\lambda \in H^0(T, \OO_T(1))$ the equation of the hyperplane $\langle C\rangle\subseteq \PP\bigl(H^0(\OO_T(1))^{\vee}\bigr)\cong \PP^5$ spanned by $C$, we have a short  exact sequence
$$0\longrightarrow H^0(T, \OO_T(1))\stackrel{\cdot \lambda}\longrightarrow H^0(T, \OO_T(2))\longrightarrow H^0(C, \OO_C(2))\longrightarrow 0,$$
from which we compute $h^0(T, \OO_T(2))=6+12=18$. We also have the commutative diagram
$$
\xymatrix{
0 \ar[r]  &\lambda \cdot H^0(T, \OO_T(1))  \ar[r] \ar[d]^{\cong} &\mbox{Sym}^2 H^0 (T, \OO_T(1))
\ar[r]  \ar[d]^{\mu_T} &\mbox{Sym}^2 H^0(C, \OO_C(1)) \ar[r] \ar[d]^{\mu_C} &0 \\
0 \ar[r] & H^0(T, \OO_T(1))  \ar[r]^{\cdot \lambda} &H^0(T, \OO_T(2)) \ar[r] &H^0(C,\OO_C(2)) \ar[r] &0
},$$
where $\mu_T$ and $\mu_C$ denote the multiplication maps. It follows that
$\mbox{Coker}(\mu_T)\cong \mbox{Coker}(\mu_C)$, that is, $T$ is projectively normal. In particular
$T\subseteq \PP^5$ lies on precisely three quadrics.

\vskip 4pt

The smooth elliptic curve $J\in |3h-E_1-\cdots-E_9|$ can be viewed as a cubic curve in $\PP^5$ spanning the plane $\Pi$. Any quadric containing $T$ also contains $J$, thus  $\Pi\subseteq \mathrm{Bs }\ \bigl| \mathcal{I}_{T/\PP^5}(2)\bigr|$.
The multiplication map $H^0(T,\OO_T(h))\otimes H^0(T, \OO_T(J))\rightarrow H^0(T,\OO_T(1))$ being obviously injective, we find that $T\cap \Pi=J$.
We finally claim that
\begin{equation}\label{bloc}
\mathrm{Bs } \ \bigl| \mathcal{I}_{T/\PP^5}(2)\bigr|=T\cup \Pi.
\end{equation}
Indeed, one inclusion having been already established, suppose by contradiction there is a point $\mathpzc{r}\in \mathrm{Bs } \ \bigl| \mathcal{I}_{T/\PP^5}(2)\bigr|\setminus (T\cup \Pi)$. We pick a general hyperplane hyperplane $\PP^4\cong H\subseteq \PP^5$ passing through $\mathpzc{r}$. Then $T\cap H=:C$ is a smooth non-hyperelliptic curve of genus $3$, where $\OO_C(1)\in \mbox{Pic}^7(C)$, whereas $H\cap \Pi=:\ell$ is a line. The components $C$ and $\ell$ meet along the divisor $\mathpzc{r}_1+\mathpzc{r}_2+\mathpzc{r_3}$ consisting of the points lying on the intersection $J\cdot H\subseteq \Pi\cap H$. Furthermore,
$$H^0\bigl(C, \OO_C(\mathpzc{r}_1+\mathpzc{r}_2+\mathpzc{r}_3)\bigr)\cong H^0(C, \OO_C(J))\cong H^0(T, \OO_T(J))$$
is a $1$-dimensional space, for the cubic $J$ does not move in its linear system. It follows that the stable genus $5$ curve $C\cup \ell$ is not trigonal. Therefore, its canonical embedding $C\cup \ell\hookrightarrow \PP^4$ is ideal-theoretically cut out by quadrics, in particular $\mathpzc{r}\in \mbox{Bs}\ \bigl|\mathcal{I}_{C\cup \ell/\PP^4}(2)\bigr|=C\cup \ell$. In particular $\mathpzc{r}\in T$, which shows that $T$ is scheme-theoretically cut out by quadrics.
\end{proof}

\vskip 5pt

We describe a  geometric construction that will yield a parametrization of $\mathfrak{Hyp}_{4,8}$. Recall that $Z\subseteq \PP^4$ denotes the cubic scroll defined by (\ref{cubic_scroll}). We fix general points $(t_1, \ldots, t_8)\in Z^8$ and a general line $\ell\subseteq \PP^4$ disjoint from $Z$. For $i=1, \ldots, 8$, we obtain further points $x_i, y_i\in Z$ via the relation
\begin{equation}\label{def:xyt}
\langle \ell, t_i\rangle \cdot Z=t_i+x_i+y_i,
\end{equation}
with the intersection being taken inside $\PP^4$.

\vskip 4pt

We consider the projection $\pi_{\ell}\colon \PP^4 \dashrightarrow \PP^2$ of center $\ell$, whose restriction $\pi_{\ell}\colon Z\rightarrow \PP^2$ is a regular morphism of degree $3$. Since
$\langle \ell, x_i\rangle=\langle \ell, y_i\rangle=\langle \ell, t_i\rangle$, it follows that $$\pi_{\ell}(x_i)=\pi_{\ell}(y_i)=\pi_{\ell}(t_i)\in \PP^2.$$ Furthermore, let us choose a general curve
$C\in \bigl|\mathcal{I}_{\{x_1, y_1, \ldots, x_8, y_8\}/Z}(6h-4E)\bigr|$. Note that
$$\mbox{dim }\bigl|\mathcal{I}_{\{x_1, y_1, \ldots, x_8, y_8\}/Z}(6h-4E)\bigr|={8\choose 2}-{5\choose 2}-1-16=1.$$

\begin{definition}\label{def:T}
Let $\mathcal{T}$ be the space of triples $(t_1, \ldots, t_8, \ \ell, \ C)\in Z^8\times \GG(1,4)\times |6h-4E|$,
where $C\in \bigl|\mathcal{I}_{\{x_1, y_1, \ldots, x_8, y_8\}/Z}(6h-4E)\bigr|$ is a nodal curve, with the points $x_i, y_i\in Z$ described by (\ref{def:xyt}).
\end{definition}

Clearly,  $\mathcal{T}$ is a locally trivial $\PP^1$-bundle over $Z^8\times \GG(1,4)$. In particular, $\mathcal{T}$ is a rational
variety of dimension $23$. Note that the $6$-dimensional automorphism group $\mbox{Aut}(\FF_1)$ acts on $\mathcal{T}$, where the action on $\GG(1,4)$ is via the identification $\mbox{Aut}(\FF_1)\cong \mbox{Aut}(Z)\subseteq PGL(5)$. Therefore the quotient
$\mathcal{T}/\mbox{Aut}(\FF_1)$ has dimension $17$ (and is of course unirational).

\begin{theorem}\label{thm:domin}
One has a dominant rational morphism $\vartheta\colon \mathcal{T} \dashrightarrow \mathfrak{Hyp}_{4,8}$. In particular, $\mathfrak{Hyp}_{4,8}$ is unirational
\end{theorem}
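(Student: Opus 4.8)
The plan is to construct the map $\vartheta$ explicitly and verify that its image is genuinely an element of $\mathfrak{Hyp}_{4,8}$, then argue dominance by a dimension count. First I would take a general point $(t_1,\ldots,t_8,\ell,C)\in\mathcal{T}$ and apply the projection $\pi_\ell\colon\PP^4\dashrightarrow\PP^2$ to the curve $C\subseteq Z$. Since $C\in|6h-4E|$ is hyperelliptic of genus $4$ by Proposition \ref{pic:hyp}, and since $C$ passes through all the points $x_i,y_i$ satisfying $\pi_\ell(x_i)=\pi_\ell(y_i)=n_i'$, the image $\Gamma':=\pi_\ell(C)\subseteq\PP^2$ acquires a node at each $n_i'$. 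I would first check that $\pi_\ell|_C$ is birational onto $\Gamma'$ (generically one-to-one away from the forced identifications), so that the normalization of $\Gamma'$ is exactly $C$; this follows because $\ell$ is general and disjoint from $Z$, so the only pairs of points of $C$ collapsed by $\pi_\ell$ are the prescribed $\{x_i,y_i\}$. A degree and genus computation then shows $\Gamma'$ is a plane octic (the image of $|6h-4E|$ composed with the degree-$3$ map $\pi_\ell|_Z$, giving degree $18$ on $\PP^2$, but one must track the multiplicities correctly, as the actual plane model should be an octic with $9+8$ nodes as described in the Introduction).

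Next I would realize $\Gamma$ as an element of $\mathfrak{Hyp}_{4,8}$ via the blow-up construction from the Introduction. The $8$ forced nodes $n_1',\ldots,n_8'$ together with a further set of $9$ nodes give the $17$ nodes of the plane model; blowing up the nine special points and passing to the linear system $|4h-E_1-\cdots-E_9|$ on $\mathrm{Bl}_9(\PP^2)$ (which by Proposition \ref{prop:T} embeds it projectively normally in $\PP^5$ as the surface $T$), I would take the image of the proper transform of $\Gamma'$. This produces an $8$-nodal curve $\Gamma\subseteq\PP^5$ of arithmetic genus $12$ whose normalization is the hyperelliptic curve $C$ of genus $4$, and the polarization $L:=\OO_\Gamma(1)$ restricted appropriately gives a line bundle in $W^2_8(\Gamma)$. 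The verification that $L\in W^2_8(\Gamma)$ and that $\arithgenus(\Gamma)=12$ reduces to Riemann-Roch and the adjunction bookkeeping already carried out in the statement of the theorems; I would confirm that the nine special points lie on a unique smooth cubic so that Proposition \ref{prop:T} applies, which holds for general choices.

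To prove dominance, I would compare dimensions. We have $\dim\mathcal{T}=23$, and the six-dimensional group $\mathrm{Aut}(\FF_1)\cong\mathrm{Aut}(Z)$ acts on $\mathcal{T}$ with the construction $\vartheta$ being $\mathrm{Aut}(\FF_1)$-invariant (since $\pi_\ell$ and the image curve are intrinsic once we quotient by automorphisms of $Z$ and the choice of $\ell$). Thus $\vartheta$ factors through the $17$-dimensional quotient $\mathcal{T}/\mathrm{Aut}(\FF_1)$. I would then compute $\dim\mathfrak{Hyp}_{4,8}$ independently: the hyperelliptic locus $\H_4$ has dimension $2\cdot4-1=7$, the choice of $8$ unordered nodes on $\Gamma$ (equivalently $8$ unordered pairs of points on $C$, i.e.\ $8$ points on the $\PP^1=C/\iota$) contributes $8$, and the line bundle $L\in W^2_8(\Gamma)$ contributes $2$, for a total of $17$. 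Matching dimensions, it then suffices to show $\vartheta$ is generically finite onto its image, which I would establish by exhibiting that the data $(t_1,\ldots,t_8,\ell,C)$ can be reconstructed (up to the finite ambiguity and the $\mathrm{Aut}(\FF_1)$-action) from $[\Gamma,L]$: the curve $C$ is the normalization, the cubic scroll $Z$ and the line $\ell$ are recovered from the geometry of $T$ and $\Pi$ via Proposition \ref{prop:T}, and the points $t_i$ are recovered from the nodes. The main obstacle I anticipate is \emph{genericity}: one must verify that for a general choice the nine auxiliary nodes really do lie on a unique smooth cubic (so Proposition \ref{prop:T} is applicable), that $\Gamma$ is irreducible with exactly $8$ nodes and no worse singularities, and that the projection $\pi_\ell$ does not introduce unexpected collapses. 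These transversality statements are exactly the ones the authors flag as delicate; the cleanest route is to exhibit a single explicit example (or a suitably generic one verified by a tangent-space or semicontinuity argument) where all the genericity hypotheses hold, and then conclude by openness that they hold generically, thereby making $\vartheta$ dominant and $\mathfrak{Hyp}_{4,8}$ unirational.
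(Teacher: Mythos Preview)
Your construction of $\vartheta$ matches the paper's, and your plan to verify the transversality claims by exhibiting an explicit (degenerate) example is exactly what the paper does: it specializes $C$ to $B+\ell_1+\ell_2$ with $B\in|4h-2E|$ a smooth genus~$2$ curve and $\ell_1,\ell_2$ rulings, and checks claims (i) and (ii) directly on this reducible model.

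Where your argument diverges is in the proof of dominance, and here there is a genuine gap. Your dimension count for $\mathfrak{Hyp}_{4,8}$ is incorrect: the $8$ nodes of $\Gamma$ correspond to $8$ \emph{arbitrary} unordered pairs $\{x_i,y_i\}\subseteq C$, not to hyperelliptic conjugate pairs, so they are parametrized by $(\mathrm{Sym}^2 C)^8$ and contribute $16$ dimensions, not $8$. More seriously, even a correct dimension count together with generic finiteness onto the image would not yield dominance unless you first know $\mathfrak{Hyp}_{4,8}$ is irreducible, which is not established a priori. The paper sidesteps both issues by proving surjectivity directly: starting from a point $[\Gamma,L]$ general in \emph{any} component of $\mathfrak{Hyp}_{4,8}$, it uses Proposition~\ref{pic:hyp} to realize the pair $\bigl[C,\nu^*L\bigr]$ as a curve in $|6h-4E|$ on $Z$ with $\nu^*L=\OO_C(2h-E)$, then recovers $\ell$ as the center of the projection $H^0(Z,\OO_Z(1))\twoheadrightarrow H^0(\PP^2,\OO_{\PP^2}(1))$ and the $t_i$ as the residual points of $\pi_\ell^{-1}(n_i')$ on $Z$. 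You in fact sketch this very reconstruction, but you label it as a proof of generic finiteness rather than of surjectivity; reframed correctly, it \emph{is} the dominance argument and renders the dimension count unnecessary.

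A minor point: the degree of $\Gamma'=\pi_\ell(C)$ is simply $\deg\OO_C(2h-E)=(6h-4E)\cdot(2h-E)=8$, since $\ell$ is disjoint from $C$; there is no degree~$18$ intermediate step.
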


\begin{proof}
We start with a suitably general point $(t_1, \ldots, t_8, \ell, C)\in \mathcal{T}$. In particular $C\subseteq Z\subseteq \PP^4$ is a smooth hyperelliptic curve of genus $4$ and the projection $\pi_{\ell}:Z\rightarrow \PP^2$ is regular.

\vskip 4pt

\noindent {\bf  Claim (i):} The image $\Gamma'$ of the projection $\pi:=\pi_{\ell |C}\colon C\rightarrow \Gamma'\subseteq \PP^2$ is a \emph{nodal} octic curve. In particular, $\pi$ is the normalization map and $\Gamma'$ has precisely $17={7\choose 2}-4$ nodes. 

\vskip 3pt

Assuming this claim for the moment, we set $n_i':=\pi_{\ell}(t_i)=\pi_{\ell}(x_i)=\pi_{\ell}(y_i)$, where $x_i, y_i\in C$ are defined via (\ref{def:xyt}).  We denote by $\bigl\{\mathpzc{o}_1, \ldots, \mathpzc{o}_9\}:=\mbox{Sing}(\Gamma')\setminus \{n_1', \ldots, n_8'\}$ the set of remaining nodes of $\Gamma'$. Let $E_i$ be the exceptional divisor at $\mathpzc{o}_i$ on the blow-up $\mbox{Bl}_9(\PP^2)$.

\vskip 4pt

\noindent {\bf Claim (ii):} One has an embedding
$\varphi:=\phi_{|4h-E_1-\cdots-E_9|}\colon \mbox{Bl}_9(\PP^2)\hookrightarrow \PP^5.$

\vskip 3pt

Via Proposition \ref{prop:T}, in order to establish claim (ii), it suffices to show that through the points $\mathpzc{o}_1, \ldots, \mathpzc{o}_9\in \PP^2$ there passes a unique smooth cubic curve.

\vskip 3pt

Assuming both claims (i) and (ii), we proceed with our proof. The map $\varphi$ is an embedding and from Proposition \ref{prop:T} its image $T\subseteq \PP^5$ is a projectively normal surface. We consider the image $\Gamma\subseteq \PP^5$ of the strict transform of $\Gamma'$ in $\mbox{Bl}_9(\PP^2)$  under the map $\varphi$. Then $\Gamma$ has nodes at the points $n_i:=\varphi(n_i')\in \PP^5$ for $i=1, \ldots, 8$ and is of degree
$$\mbox{deg}(\OO_{\Gamma}(1))=(8h-2E_1-\cdots-2E_9)(4h-E_1-\cdots-E_9)=14.$$ Comparing degrees, we conclude that $\Gamma\subseteq \PP^5$ is a quadratic section of $T$. Furthermore, we have a sequence of maps $C\rightarrow \Gamma \rightarrow \Gamma'$, showing that the smooth hyperelliptic curve $C$ is the normalization of $\Gamma$. Summarizing all this, the assignment
$$\vartheta\bigl((t_1, \ldots, t_8, \ell, C)\bigr):=[\Gamma, \omega_{\Gamma}(-1)]\in \mathfrak{Hyp}_{4,8},$$
where $\omega_{\Gamma}(-1)=\OO_{\Gamma}(4h-E_1-\cdots-E_9)$ is well-defined.

\vskip 5pt

We now show that each irreducible $8$-nodal curve $[\Gamma, L]\in \mathfrak{Hyp}_{4,8}$ which is general in any component of $\mathfrak{Hyp}_{4,8}$ appears this way. We fix such a pair and we may assume that $L\in W^2_8(\Gamma)$ is base point free. Let $\nu
\colon C\rightarrow \Gamma$ be the normalization map and denote by $\Gamma'$ the image of the map $\phi_{|L|}\colon \Gamma \rightarrow \PP^2$. Setting $\{n_1, \ldots, n_8\}=\mbox{Sing}(\Gamma)$, we denote by $\{x_i, y_i\}:=\nu^{-1}(n_i)$ the inverse images of the nodes of $\Gamma$ for $i=1, \ldots, 8$. Then $[C,\nu^*(L)]$ is a general point of the universal Picard variety $\mathcal{P}ic_{\mathcal{H}_4}^8$. Via Proposition \ref{pic:hyp} we may assume $C$ is embedded in the cubic scroll $Z\cong \mbox{Bl}_{\mathpzc{o}}(\PP^2)\subseteq \PP^4$ as a curve in the linear system $|6h-4E|$. Furthermore $\nu^*(L)=\OO_C(1)=\OO_C(2h-E)$, where, as usual, $E$ is the exceptional divisor
at the point $\mathpzc{o}$. Set $\pi:=\phi_{|L|}\circ \nu\colon C\rightarrow \PP^2$.

\vskip 3pt

Using the canonical isomorphism $H^0(Z, \OO_Z(1))\cong H^0(C, \OO_C(1))$, we set $$\ell:=\PP\Bigl(H^0(Z, \OO_Z(1)\bigr)/\pi^* H^0(\PP^2, \OO_{\PP^2}(1))^{\vee}\Bigr)\in \GG(1,4)$$ and consider the projection $\pi_{\ell}\colon \PP^4\dashrightarrow \PP^2$ with center $\ell$. Clearly,
$\pi_{\ell}$ is an  extension of the regular map $\pi:C\rightarrow \PP^2$. Note that $n_i':=\pi_{\ell}(x_i)=\pi_{\ell}(y_i)$.
We set $t_i:=\pi_{\ell}^{-1}(n_i')\setminus \{x_i, y_i\}\in Z$. Then clearly
$\vartheta\bigl((t_1,\ldots,t_8, \ell,C)\bigr)=[C,L]$, thus showing that $\vartheta$ is dominant.

\vskip 5pt

\noindent \emph{Proof of the claims (i) and (ii).} By degeneration we exhibit a point $\mathpzc{p}:=(t_1, \ldots, t_8, \ell, C)\in \mathcal{T}$, where $C$ is a \emph{reducible} nodal curve, such that $\vartheta(\mathpzc{p})$ is well-defined and both (i) and (ii) hold.

For a line $\ell$ in $\PP^4$, if again $\pi_{\ell}\colon \PP^4\dashrightarrow \PP^2$ is the projection with center $\ell$, the rational map
$$\xi:|4h-2E|\times \GG(1,4)\dashrightarrow \bigl(\PP^2\bigr)^{[8]}, \ \ \ \  \xi(B, \ell):=\mathrm{Sing}(\pi_{\ell}(B)),$$
is dominant. Therefore we can start with $8$ general points $\mathpzc{o}_1, \ldots, \mathpzc{o}_8\in \PP^2$ and choose a smooth genus
$2$ curve $B \in |4h-2E|$ on $Z$ and a line $\ell\subseteq \PP^4$, such that the image $B':=\pi_{\ell}(B)\subseteq \PP^2$ is a
sextic plane curve with nodes at $\mathpzc{o}_1, \ldots, \mathpzc{o}_8$ and no further singularities.

\vskip 3pt

The linear system $\bigl|\mathcal{I}_{\{\mathpzc{o}_1, \ldots, \mathpzc{o}_8\}/\PP^2}(3)\bigr|$ is a \emph{general} pencil of plane cubics. Its general member is smooth and its $12$ singular members are irreducible one-nodal rational curves with singularities disjoint from the set $\{\mathpzc{o}_1, \ldots, \mathpzc{o}_8\}$.  The plane cubics through $\mathpzc{o}_1, \ldots, \mathpzc{o}_8$
cut out the canonical linear system on $B$, that is,
$$ \pi_{\ell}^*\Bigl(\bigl|\mathcal{I}_{\{\mathpzc{o}_1, \ldots, \mathpzc{o}_8\}/\PP^2}(3)\bigr|\Bigr)=|\omega_B|.$$
This implies that the ninth remaining base point of the pencil  $\bigl|\mathcal{I}_{\{\mathpzc{o}_1, \ldots, \mathpzc{o}_8\}/\PP^2}(3)\bigr|$
does not lie on $B'$, since otherwise $B$ would have a pencil of degree one, hence $B$ would be rational.

\vskip 3pt

We now choose two general rulings $\ell_1$ and $\ell_2$ of $Z$, that is, $\ell_i\equiv h-E$, and set $F_i':=\pi_{\ell}(\ell_i)$.
Both $F_1'$ and $F_2'$ are lines in $\PP^2$ meeting in a point $\mathpzc{o}_9$. Furthermore, $\ell_i\cdot B=2$ and we set
\begin{equation}\label{def:Csing}
C:=B+\ell_1+\ell_2\in |6h-4E|,
\end{equation}
viewed as a nodal hyperelliptic curve of genus $4$. Note that both $\ell_1$ and $\ell_2$ meet $B$ in a pair of hyperelliptic conjugate points.
The image curve
$$\Gamma':=\pi_{\ell}(C)=B'+F_1'+F_2'\subseteq \PP^2$$
is a reducible nodal octic, where for $i=1,2$, the intersection $B'\cdot F_i'$ consists of $6$ nodes, namely the $2$ points in $\pi_{\ell}(B\cdot \ell_i)$, as well as $4$ further nodes on each $F_1'$ and $F_2'$ respectively.

\vskip 4pt

Since $\mathpzc{o}_9$ can be chosen freely in $\PP^2$, through the points $\mathpzc{o}_1, \ldots, \mathpzc{o}_9$ there passes a unique smooth cubic. Therefore, the map $\varphi:=\phi_{|4h-E_1-\cdots-E_9|}\colon \mbox{Bl}_{\mathpzc{o}_1, \ldots, \mathpzc{o}_9}(\PP^2)\hookrightarrow T\subseteq \PP^5$ is an embedding. The image $F_i\subseteq \PP^5$ of the strict transform in $\mbox{Bl}_{\mathpzc{o}_1, \ldots, \mathpzc{o}_9}(\PP^2)$ of $F_i'$ is a twisted cubic, whereas the image under $\varphi$ of the proper transform of $B'$ can be identified with the original smooth  genus $2$ curve $B$ embedded by the linear system $\bigl|\omega_B\otimes \pi_{\ell|B}^*\OO_{\PP^2}(1)\bigr|$.
The intersection $F_i\cdot B$ on $T$ is transverse and consists of the $6$ points in $\varphi(F_i'\cdot B')$, for $i=1,2$. Finally, $F_1$ and $F_2$ are disjoint. We consider the nodal curve

\begin{equation}\label{def:degg}
\Gamma:=B+F_1+F_2\subseteq \PP^5.
\end{equation}

\vskip 4pt

Set $\{t_1, \ldots, t_8\}:=\pi_{\ell}^{-1}\Bigl(\bigl(F_1'+F_2'\bigr)\cdot B'\setminus \pi_{\ell}\bigl((F_1+F_2)\cdot B\bigr)\Bigr)\setminus (B+\ell_1+\ell_2)\subseteq Z$. After choosing an ordering on this set of $8$ points, clearly $\mathpzc{p}:=(t_1, \ldots, t_8,\ell, C)\in \mathcal{T}$ and $\vartheta(\mathpzc{p})=[\Gamma, \omega_{\Gamma}(-1)]$, with $\OO_{\Gamma}(1)$ being defined via the embedding (\ref{def:degg}). The $8$ assigned nodes of $\Gamma$ as an element of $\mathfrak{Hyp}_{4,8}$ are the points in $(F_1+F_2)\cdot B$ that are \emph{not} the images of $(\ell_1+\ell_2)\cdot B$.  This completes the proof of both claims (i) and (ii).
\end{proof}

\begin{corollary}\label{cor:projnorm}
For a  general point $[\Gamma,L]\in \mathfrak{Hyp}_{4,8}$, the curve
$\phi_{\omega_{\Gamma}\otimes L^{\vee}}\colon \Gamma \hookrightarrow \PP^5$ is a projectively normal $8$-nodal curve of degree $14$.
\end{corollary}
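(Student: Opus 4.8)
The plan is to deduce the projective normality of $\Gamma$ from that of the surface $T$ on which it lives, since the $8$-nodality and the degree have already been recorded in the proof of Theorem \ref{thm:domin}. Indeed, a general $[\Gamma,L]\in \mathfrak{Hyp}_{4,8}$ lies in the image of the dominant map $\vartheta$, hence arises from a general point of the irreducible variety $\mathcal{T}$; for such a point the construction of Theorem \ref{thm:domin} realizes $\phi_{\omega_{\Gamma}\otimes L^{\vee}}$ as the $8$-nodal embedding $\Gamma\subseteq \PP^5$ of degree $14$, where $\Gamma$ is a quadratic section of the projectively normal surface $T=\phi_{|4h-E_1-\cdots-E_9|}(\mathrm{Bl}_9(\PP^2))$ of Proposition \ref{prop:T}. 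In particular $\OO_T(\Gamma)=\OO_T(2)$ and $\omega_{\Gamma}\otimes L^{\vee}=\OO_{\Gamma}(1)$, so that only the projective normality of $\Gamma$ requires an argument.

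First I would reduce the statement to a vanishing on $T$. Since $\Gamma\subseteq T\subseteq \PP^5$, the restriction map $H^0(\PP^5,\OO_{\PP^5}(m))\rightarrow H^0(\Gamma,\OO_{\Gamma}(m))$ factors through $H^0(T,\OO_T(m))$, and the first factor is surjective for all $m$ by the projective normality of $T$. Hence it suffices to show that $H^0(T,\OO_T(m))\rightarrow H^0(\Gamma,\OO_{\Gamma}(m))$ is surjective for every $m\geq 1$. Twisting the structure sequence of the quadratic section $\Gamma\in|\OO_T(2)|$ (a Cartier divisor on the smooth surface $T$) by $\OO_T(m)$ gives
\[
0\longrightarrow \OO_T(m-2)\longrightarrow \OO_T(m)\longrightarrow \OO_{\Gamma}(m)\longrightarrow 0,
\]
so the desired surjectivity is equivalent to $H^1(T,\OO_T(m-2))=0$ for all $m\geq 1$, that is, to $H^1(T,\OO_T(k))=0$ for all $k\geq -1$.

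The remaining task is this cohomology computation on $T=\mathrm{Bl}_9(\PP^2)$, where $\OO_T(1)=\OO_T(D)$ with $D=4h-E_1-\cdots-E_9$ and $K_T=-3h+E_1+\cdots+E_9$. For $k=0$ the vanishing $H^1(T,\OO_T)=0$ is the rationality of $T$. For $k=-1$, Serre duality gives $H^1(T,\OO_T(-D))\cong H^1(T,K_T+D)^{\vee}=H^1(T,\OO_T(h))^{\vee}$, and since $\OO_T(h)$ is the pull-back of $\OO_{\PP^2}(1)$ under the contraction $\pi\colon T\rightarrow \PP^2$ (for which $R^1\pi_*\OO_T=0$), this equals $H^1(\PP^2,\OO_{\PP^2}(1))^{\vee}=0$. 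For $k\geq 1$ I would invoke Kawamata-Viehweg vanishing: writing $\OO_T(kD)=K_T+(kD-K_T)$ with $kD-K_T=kD+(-K_T)$, the class $D$ is very ample by Proposition \ref{prop:T} and $-K_T=3h-E_1-\cdots-E_9$ is nef, being represented by the irreducible elliptic curve $J$ with $J^2=0$; hence $kD-K_T$ is nef and big and $H^1(T,\OO_T(kD))=0$.

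The one point deserving care is the nefness of $-K_T$, and this is exactly where the hypothesis of Proposition \ref{prop:T} enters: the $9$ points $\mathpzc{o}_1,\ldots,\mathpzc{o}_9$ lie on a \emph{unique smooth} cubic $J$, so that $|-K_T|=\{J\}$ consists of a single smooth elliptic curve, which is therefore nef. That this configuration is the general one is guaranteed by Theorem \ref{thm:domin} (see the verification of claim (ii)); alternatively, since projective normality is an open condition, it would suffice to exhibit it at the single reducible curve $\Gamma=B+F_1+F_2$ produced in the degeneration there, which again sits as a quadratic section on such a $T$. Combining the three vanishings with the reduction above shows that every restriction map $H^0(\PP^5,\OO_{\PP^5}(m))\rightarrow H^0(\Gamma,\OO_{\Gamma}(m))$ is surjective, so $\Gamma\subseteq \PP^5$ is projectively normal, completing the proof.
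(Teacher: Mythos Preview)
Your argument is correct and follows the same strategy as the paper: realize $\Gamma$ as a quadratic section of the projectively normal surface $T$ of Proposition~\ref{prop:T} and deduce the projective normality of $\Gamma$ from that of $T$. The paper carries this out only in degree $2$, writing down the commutative square relating $\mu_T$ and $\mu_{\Gamma}$ and noting that the restriction $H^0(T,\OO_T(2))\to H^0(\Gamma,\OO_{\Gamma}(2))$ is surjective; it then simply asserts that projective normality of $T$ yields that of $\Gamma$. Your version makes this last implication explicit for every degree by proving the vanishings $H^1(T,\OO_T(k))=0$ for $k\geq -1$ (rationality for $k=0$, Serre duality plus the projection to $\PP^2$ for $k=-1$, and Kawamata--Viehweg using the very ample $D$ and the nef anticanonical class for $k\geq 1$). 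This is a genuine gain in completeness: the paper's diagram alone only gives $2$-normality, which is in fact all that is used later to compute $h^0(\mathcal{I}_{\Gamma/\PP^5}(2))=4$, whereas your argument establishes full projective normality as stated. Your remark that nefness of $-K_T$ hinges on the existence of the unique smooth cubic through $\mathpzc{o}_1,\ldots,\mathpzc{o}_9$, secured in claim~(ii) of Theorem~\ref{thm:domin}, is exactly the right justification.
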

\begin{proof}
Keeping the notation from the proof of Theorem \ref{thm:domin}, we consider the reducible nodal curve $\Gamma=F_1+F_2+B$ defined
by (\ref{def:degg}) and which appears as a quadratic section of the surface $T\subseteq \PP^5$. We have the following commutative diagram:
$$
\xymatrix{
\mbox{Sym}^2 H^0 (T, \OO_T(1))
\ar[r]^{\cong}  \ar[d]^{\mu_T} & \mbox{Sym}^2 H^0(\Gamma, \OO_{\Gamma}(1)) \ar[d]^{\mu_{\Gamma}} \\
H^0(T, \OO_T(2)) \ar[r] & H^0(\Gamma,\OO_{\Gamma}(2))
}
$$
The bottom map in this diagram is surjective. By Proposition \ref{prop:T} the surface $T$ is projectively normal, thus it follows that the same holds for $\Gamma$.
\end{proof}

\section{From scrolls of degree 9 to nodal hyperelliptic curves}

Let $\H_9$ denote the Hilbert scheme of degree $9$ scrolls $R\subseteq \PP^5$. The general point of $\H_9$ corresponds to a smooth degree $9$ scroll $R\subseteq \PP^5$. Following e.g. \cite[Lemma 1.5]{L}, one knows that $\H_9$ is smooth of dimension $h^0(R, N_{R/\PP^5})=59$. We denote by $\H_9^8$ the closure in $\H_9$ of the locus of scrolls having precisely $8$ (non-normal) nodes and no further singularities.  Using \cite[Proposition  0.4]{L} it follows that $\H_9^8$ is nonempty and has pure codimension $8$ inside $\H_9$, that is, $\mbox{dim}( \H_9^8)=51$. We introduce the parameter space of \emph{unparametrized} degree $9$  nodal scrolls
$$\mathfrak{H}_{\mathrm{scr}}:=\H_9^8/PGL(6).$$
Theorem \ref{isom}, coupled with results from \cite{L}, imply that $\hh$ is an irreducible variety of dimension $16=
\mbox{dim}( \H_9^8)-\mbox{dim } PGL(6)$.

\vskip 4pt

Each nodal scroll $[R]\in \mathfrak{H}_{\mathrm{scr}}$ is a projection $\pi\colon \PP^{10}\dashrightarrow \PP^5$ of a smooth degree $9$ scroll $$\FF_1:=\mbox{Bl}_{\mathpzc{o}}(\PP^2)\hookrightarrow R'\subseteq \PP^{10},$$ embedded by the linear system $\phi_{|5h-4E|}\colon \FF_1\hookrightarrow \PP^{10}$. Here $h$ is the pull-back of the line class under the morphism $\FF_1\rightarrow \PP^2$ and
$E$ denotes the exceptional divisor corresponding to the point $\mathpzc{o}\in \PP^2$. The rulings of $R'$ are the fibers of the morphism $\phi_{|h-E|}\colon R'  \rightarrow \PP^1$ and correspond to lines in $\PP^{10}$. The center of the projection $\pi$ is a $4$-plane $\Lambda \subseteq \PP^{10}$ which is $8$-secant to the secant variety $\mbox{Sec}(R')$ and the restriction $\pi\colon R'\rightarrow R$ of the projection map $\pi$ may be regarded as the normalization of $R$.  We denote by $\{n_1, \ldots, n_8\}\subseteq \PP^5$ the set of (non-normal) nodes of $R$ and by $\{x_i, y_i\}=\pi^{-1}(n_i)\subseteq R'$, for $i=1, \ldots, 8$. The projections of the rulings of $R'\subseteq \PP^{10}$ passing through $x_i$ and $y_i$ correspond to lines on $R\subseteq \PP^5$ meeting in the node $n_i$.

\begin{definition} We denote by $\mathcal{P}$ the moduli space of pairs $[R, \ell_1+\ell_2+\ell_3+\ell_4]$, where $[R]\in \mathfrak{H}_{\mathrm{scr}}$ and $\ell_1, \ldots, \ell_4\in \GG(1,5)$ are rulings of $R\subseteq \PP^5$, regarded as an unordered set.
\end{definition}
Over a nonempty open subset, the projection map $\mathcal{P}\rightarrow \mathfrak{H}_{\mathrm{scr}}$ sending $[R, \ell_1+\cdots+\ell_4]$ to $[R]$ is a $\PP^1\cong \mm_{0,4}/\mathfrak{S}_{4}$-bundle. To a general element $[R, \ell_1+\ell_2+\ell_3+\ell_4]\in \P$ (in which case we may assume that the rulings $\ell_i$ are disjoint from $\mbox{Sing}(R)$), we can associate a unique quadric $Q\subseteq \PP^5$ containing $\mbox{Sing}(R)$ and the rulings $\ell_1, \ldots, \ell_4$. The quadric $Q$ determines  a residual curve $\Gamma\subseteq \PP^5$ given by the relation
(\ref{def:gamma}), that is,
$$R\cdot Q=\ell_1+\ell_2+\ell_3+\ell_4+\Gamma.$$

Next we show that the assignment $[R]\mapsto[\Gamma]$ described by (\ref{def:gamma}) induces a well defined map $\chi\colon \mathcal{P}\dashrightarrow \mathfrak{Hyp}_{4,8}$. For our next result, recall that we have studied in Theorem \ref{thm:domin} the dominant morphism $\vartheta\colon \mathcal{T}\dashrightarrow \mathfrak{Hyp}_{4,8}$.

\begin{proposition}\label{well-def} For a general element $[R, \ell_1+\ell_2+\ell_3+\ell_4]\in \hh$, the  curve $\Gamma\subseteq \PP^5$ has nodes at $n_1, \ldots, n_8$ and no further singularities. Its normalization $C:=\pi^{-1}(\Gamma)\subseteq R'$ is a smooth hyperelliptic curve of genus $4$.
\end{proposition}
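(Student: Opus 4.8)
The plan is to transport the whole configuration back to the smooth scroll $R'=\FF_1$ through the normalization $\pi\colon R'\to R$, to reduce the assertion about the singularities of $\Gamma$ to a statement about the preimage $C=\pi^{-1}(\Gamma)$, and finally to establish the smoothness of $C$ by a semicontinuity argument anchored on the dominant morphism $\vartheta$ of Theorem~\ref{thm:domin}. First I would run the class computation. Since $R'\hookrightarrow\PP^{10}$ is embedded by $|5h-4E|$ and $R$ is its projection, one has $\pi^*\OO_R(1)=\OO_{R'}(5h-4E)$, so a quadric section pulls back to $\OO_{R'}(10h-8E)$. Each ruling $\ell_i$ of $R$ lifts to a ruling $\tilde\ell_i\in|h-E|$ of $R'$, and because the $\ell_i$ are disjoint from $\Sing(R)$ the $\tilde\ell_i$ avoid the pairs $\{x_j,y_j\}=\pi^{-1}(n_j)$. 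Pulling back the relation $R\cdot Q=\ell_1+\ell_2+\ell_3+\ell_4+\Gamma$ then gives $C=\pi^{-1}(\Gamma)\in|10h-8E-4(h-E)|=|6h-4E|$. Since $Q$ passes through every node $n_i$, its pullback, and hence $C$, passes through all sixteen points $x_1,y_1,\ldots,x_8,y_8$; as $h^0(\FF_1,6h-4E)=18$, these cut the linear system down to the expected pencil recorded in the Introduction. Adjunction on $\FF_1$, where $K_{\FF_1}=-3h+E$, yields $2p_a(C)-2=(6h-4E)\cdot(3h-3E)=6$, so $p_a(C)=4$, while the ruling pencil $|h-E|$ restricts to a $g^1_2$ on $C$; thus, once $C$ is known to be smooth, it is automatically hyperelliptic of genus $4$ (and connected, hence irreducible, because $h^1(\FF_1,\OO_{\FF_1}(-6h+4E))=0$).

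Second, I would reduce the statement on $\Sing(\Gamma)$ to smoothness of $C$. The map $\pi\colon R'\to R$ is an isomorphism away from the eight nodes and identifies $x_i$ with $y_i$; hence $\pi|_C\colon C\to\Gamma$ is an isomorphism over $\PP^5\setminus\{n_1,\ldots,n_8\}$ and glues $x_i$ to $y_i$. Consequently, provided $C$ is smooth at the sixteen points $x_i,y_i$, the curve $\Gamma$ is smooth away from the $n_i$ and acquires at each $n_i$ exactly two branches, lying respectively on the two distinct sheets of $R$ through that node. For a general choice of the data these branches are transverse, so $n_i$ is an ordinary node of $\Gamma$. This proves that $\Gamma$ has nodes precisely at $n_1,\ldots,n_8$ and no further singularities, and that $C$ is its normalization, as soon as $C$ is smooth.

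The main obstacle is therefore the smoothness of the general member $C$. The difficulty is that $C$ varies only in a \emph{pencil} whose base points $x_i,y_i$ sit in the special position of endpoints of the eight chords of $R'$ meeting the center $\Lambda$, so Bertini's theorem does not apply directly. I would circumvent this by openness together with the reverse construction of Theorem~\ref{thm:domin}. Smoothness is an open condition in the flat family of pencils over the irreducible variety $\mathcal{P}$, so it suffices to exhibit a single $[R,\ell_1+\ell_2+\ell_3+\ell_4]$ for which $C$ is smooth. Such a point is supplied by the dominant family $\vartheta\colon\mathcal{T}\dashrightarrow\mathfrak{Hyp}_{4,8}$: a general image point $[\Gamma,L]$ already has a smooth hyperelliptic genus-$4$ normalization $C$ lying in $|6h-4E|$, and reconstructing the scroll as $R=\bigcup_{x\in C}\langle\pi(x),\pi(\iota(x))\rangle$ via the hyperelliptic involution $\iota$ produces a point of $\mathcal{P}$ whose associated residual curve is this $\Gamma$. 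Since smoothness of $C$ and the nodal structure of $\Gamma$ then hold at this point, openness propagates them to the general element of $\mathcal{P}$.

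The step where I expect to spend the most care is the verification that the reconstructed $R$ is genuinely an $8$-nodal nonic scroll, i.e.\ that it lies in $\hh$: one must check that the reconstructed projection center $\Lambda$ is honestly $8$-secant and that the chosen rulings $\tilde\ell_i$ avoid the points $x_j,y_j$, so that the forward assignment $[R,\ell_1+\ell_2+\ell_3+\ell_4]\mapsto[\Gamma,\omega_\Gamma(-1)]$ and the reverse construction through $\vartheta$ are mutually inverse over a dense open locus. This is precisely the transversality that the Introduction flags as delicate, and it is what makes the detour through $\mathfrak{Hyp}_{4,8}$ preferable to a direct analysis of the pencil $|\mathcal{I}_{\{x_1,y_1,\ldots,x_8,y_8\}/R'}(6h-4E)|$.
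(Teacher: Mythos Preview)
Your overall architecture matches the paper's: pull the relation back through $\pi\colon R'\to R$ to identify $C\in|6h-4E|$, note that the rulings restrict to a $g^1_2$ so any smooth $C$ is hyperelliptic of genus $4$, reduce the assertion on $\Sing(\Gamma)$ to smoothness of $C$, and then establish smoothness by exhibiting a single good point and invoking openness over the irreducible parameter space. The first paragraph of the paper's proof is essentially your first two paragraphs.

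The substantive divergence is in \emph{how} to produce the good point. You propose to take a general $[\Gamma,L]\in\mathrm{Im}(\vartheta)$ with smooth normalization $C$ and reconstruct $R=\bigcup_{x\in C}\langle\pi(x),\pi(\iota(x))\rangle$, then check $[R]\in\hh$. You correctly flag this last verification as the crux, but you do not supply a mechanism for it; verifying that the reconstructed scroll has only $8$ nodes (and not more, and only nodal singularities) for a \emph{general} smooth $C$ is exactly the transversality that is hard to establish abstractly. The paper sidesteps this by choosing a very specific \emph{reducible} degeneration: take $C=\Gamma_1\cup\Gamma_2$ with $\Gamma_1,\Gamma_2\in|3h-2E|$ two general smooth rational curves on the cubic scroll $Z\subseteq\PP^4$, so that the hyperelliptic involution interchanges the components. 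After projecting from a general line $\ell$ and partitioning the resulting nodes, the scroll $R=\bigcup_{x\in\Gamma_1}\langle x,\iota(x)\rangle$ is completely explicit, and one checks directly (or with \emph{Macaulay}) that it has exactly $8$ nodes. This concrete construction simultaneously shows $\mathrm{Im}(\chi)\cap\mathrm{Im}(\vartheta)\neq\emptyset$ and that $\chi$ is well defined at that point, which by openness is enough.

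So your plan is sound but incomplete at precisely the step you anticipated: rather than hoping the general reconstruction lands in $\hh$, you should specialize $C$ to a reducible curve where the scroll becomes explicit enough to verify the nodal condition by hand.
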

\begin{proof}
Recall that $\pi\colon R'\rightarrow R$ is the normalization map and set $\ell_i':=\pi^{-1}(\ell_i)$. Since $\ell_i'$ is a ruling of $R'$ we have that $\ell_i\equiv h-E$, where we have identified $\FF_1$ and $R'$. From (\ref{def:gamma}) we then obtain  $C\equiv 6h-4E$. Furthermore,  $C\cdot (h-E)=2$, that is, $\OO_C(h-E)\in W^1_2(C)$ is the hyperelliptic linear system on $C$. For a general choice of the rulings, we have $\ell_i\cap \{n_1, \ldots, n_8\}=\emptyset$, therefore from (\ref{def:gamma}) it follows that $n_i\in \Gamma$ and hence $x_i, y_i\in C$.

\vskip 3pt

We now show that $\chi$ is well defined and in fact $\mbox{Im}(\chi)\cap \mbox{Im}(\vartheta)\neq \emptyset$.  To that end we use a further degeneration inside the linear system $|6h-4E|$ on the cubic scroll $Z\subseteq \PP^4$. We start with two general curves $\Gamma_1, \Gamma_2\in |3h-2E|$ on $Z$. Both $\Gamma_1$ and $\Gamma_2$ are smooth rational curves meeting in $5=(3h-2E)^2$ points, which we call $v_1, \ldots, v_5\in Z$. The union $\Gamma_1\cup \Gamma_2$ is a stable hyperelliptic curve of genus $4$ and the hyperelliptic involution interchanges $\Gamma_1$ and $\Gamma_2$. Precisely, if $u:=\phi_{|h-E|}\colon Z\rightarrow \PP^1$ is the fibration given by the rulings of $Z$, then $\iota\colon \Gamma_1 \rightarrow \Gamma_2$ is the isomorphism  given by $\iota(u^{-1}(t)\cdot \Gamma_1)=u^{-1}(t)\cdot \Gamma_2$, for every $t\in \PP^1$, with the map $(\iota, \iota^{-1})\colon \Gamma_1\cup \Gamma_2\rightarrow \Gamma_2\cup \Gamma_1$  being the hyperelliptic involution.

\vskip 3pt

We pick a general line $\ell\in \GG(1,4)$ and consider the degree $3$ map $\pi_{\ell}\colon Z\rightarrow \PP^2$ obtained by restricting the projection map. Set $\Gamma_i':=\pi_{\ell}(\Gamma_i)\subseteq \PP^2$, for $i=1,2$. Both $\Gamma_1'$ and $\Gamma_2'$ are $3$-nodal plane quartics and we set $\mathrm{Sing}(\Gamma_1')=\{\mathpzc{o}_1, \mathpzc{o}_2, \mathpzc{o}_3\}$ and $\mathrm{Sing}(\Gamma_2')=\{\mathpzc{o}_4, \mathpzc{o}_5, \mathpzc{o}_6\}$ respectively. The curves $\Gamma_1'$ and $\Gamma_2'$ meet transversally at $16$ points, namely $n_{i+8}':=\pi_{\ell}(v_i)$ for $i=1, \ldots, 5$ and $11$ further nodes which we partition into two groups we denote by $\{\mathpzc{o}_7, \mathpzc{o}_8, \mathpzc{o}_9\}$ and $\{n_1', \ldots, n_{8}'\}$ respectively. It is now straightforward to check that $\Gamma_1$ and $\Gamma_2$ can chosen in such a way that through $\mathpzc{o}_1, \ldots, \mathpzc{o}_9$ there passes a unique smooth cubic curve. Recalling that $E_i$ is the exceptional divisor corresponding to the point $\mathpzc{o}_i$ for $i=1,\ldots, 9$, we apply Proposition \ref{prop:T} and obtain that the map
$$\varphi:=\phi_{|4h-E_1-\cdots-E_9|}\colon \mathrm{Bl}_{9}(\PP^2)\hookrightarrow \PP^5$$
is an embedding having as image a projectively normal surface $T\subseteq \PP^5$ of degree $7$. The image under $\varphi$ of the strict transform of $\Gamma_1'\cup \Gamma_2'$ in $\mbox{Bl}_9(\PP^2)$ is the stable curve
\begin{equation}\label{def:redG}
\Gamma=\Gamma_1+\Gamma_2 \subseteq T\subseteq \PP^5,
\end{equation}
where $\Gamma_1$ and $\Gamma_2$ are smooth rational curves of degree $7$ meeting at the points $n_i:=\varphi(n_i')$, for $i=1,\ldots, 13$.
The $8$-nodal scroll associated via (\ref{def:gamma}) to the curve $\Gamma$ described by (\ref{def:redG}) is then
$$R:=\bigcup_{x\in \Gamma_1} \bigl\langle x, \iota(x)\bigr\rangle\subseteq \PP^5.$$
This is a scroll of degree $9$ which has nodes at $n_1, \ldots, n_8$ (and not at $n_9, \ldots, n_{13}$, which are fixed by the hyperelliptic involution on $\Gamma_1\cup \Gamma_2$). One checks either directly or with \emph{Macaulay} that $R$ has no further singularities. This shows that
$[\Gamma, \omega_{\Gamma}(-1)]\in \mbox{Im}(\vartheta)\cap \mbox{Im}(\chi)$.
\end{proof}

\vskip 4pt

\begin{remark} The construction described in Proposition \ref{well-def} provides an alternative proof of the result in \cite{L} stating that the Hilbert scheme $\H_9^8$ of $8$-nodal scrolls of degree $9$ is nonempty.
\end{remark}

Therefore we have a well-defined rational map
$$\chi\colon \P\dashrightarrow \mathfrak{Hyp}_{4,8}, \ \ \ \ \chi\bigl([R, \ell_1+\ell_2+\ell_3+\ell_4]\bigr):=[\Gamma, \omega_{\Gamma}(-1)].$$
As discussed in the previous section, since $\OO_{\Gamma}(2)$ is non-special, applying  Riemann-Roch we find $h^0(\Gamma, \OO_{\Gamma}(2))=2\mbox{deg}(\Gamma)+1-p_a(\Gamma)=17$. Since the image of $\chi$ intersects the image of the dominant map $\vartheta\colon \mathcal{T}\dashrightarrow \mathfrak{Hyp}_{4,8}$, applying Corollary \ref{cor:projnorm} we may also assume that $\Gamma\subseteq \PP^5$ is projectively normal, hence $h^0\bigl(\PP^5, \mathcal{I}_{\Gamma/\PP^5}(2)\bigr)=4$.  Furthermore, the last part of Proposition \ref{prop:T} yields that  base locus $\mathrm{Bs}
\ \bigl|\mathcal{I}_{\Gamma/\PP^5}(2)\bigr|$ is a nodal curve $Y$ of degree $16$ containing $\Gamma$ as a component, that is,
\begin{equation}\label{YB}
Y=\Gamma+B,
\end{equation}
 with $\omega_Y=\OO_Y(2)$.
Furthermore, the curves $B$ and $\Gamma$ meet transversally. Using \cite[Example 9.1.12]{Ful}, we have the formulas
$$p_a(\Gamma)-p_a(B)=\frac{1}{2}\cdot (8-6)\bigl(\mbox{deg}(\Gamma)-\mbox{deg}(B)\bigr)=12, \ \ \Gamma\cdot B=2\mbox{deg}(\Gamma)+2-2p_a(\Gamma)=6,$$ hence $B$ is a smooth conic $6$-secant to $\Gamma$. We introduce the $2$-plane spanned by $B$
$$\Pi:=\langle B\rangle\subseteq \PP^5.$$
Viewing $B\cdot \Gamma$ as a degree $6$ divisor on $\Gamma$  disjoint from the nodes $n_1, \ldots, n_8$, we have
\begin{equation}\label{gammaem}
\omega_{\Gamma}(B\cdot \Gamma)\cong \OO_{\Gamma}(2).
\end{equation}

\begin{proposition}
There exists a $3$-dimensional linear system $V\subseteq H^0\bigl(\PP^5, \mathcal{I}_{\Gamma/\PP^5}(2)\bigr)$ containing $\Pi$.
\end{proposition}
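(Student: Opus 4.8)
The plan is to show that among the four-dimensional space $H^0\bigl(\PP^5, \mathcal{I}_{\Gamma/\PP^5}(2)\bigr)$ of quadrics through $\Gamma$, exactly a three-dimensional subspace $V$ also contains the plane $\Pi=\langle B\rangle$. First I would recall the setup from Proposition \ref{prop:T}: since the image of $\chi$ meets the image of $\vartheta$, a general $\Gamma$ arises as a quadratic section of the projectively normal surface $T\subseteq \PP^5$, and $\mathrm{Bs}\ \bigl|\mathcal{I}_{\Gamma/\PP^5}(2)\bigr|=\Gamma+B=Y$, with $B$ a smooth conic spanning the plane $\Pi$ that meets $\Gamma$ transversally in the degree $6$ divisor $B\cdot \Gamma$. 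The relation \eqref{gammaem}, namely $\omega_{\Gamma}(B\cdot\Gamma)\cong\OO_{\Gamma}(2)$, will be the arithmetic engine of the count.

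The key computation is to evaluate $h^0\bigl(\PP^5, \mathcal{I}_{\Pi\cup\Gamma/\PP^5}(2)\bigr)$, since $V$ is exactly the space of quadrics containing \emph{both} $\Gamma$ and $\Pi$. I would compare quadrics through $\Gamma$ with quadrics through $Y=\Gamma+B$ using the restriction to $\Pi$. A quadric $q$ vanishing on $\Gamma$ restricts on $\Pi\cong\PP^2$ to a conic $q|_\Pi$ that must vanish on the intersection $\Gamma\cap\Pi$. Because $B\subseteq\Pi$ is a conic and $B\cdot\Gamma$ is a degree $6$ divisor, the six points of $\Gamma\cap\Pi=B\cdot\Gamma$ lie on $B$, which is itself a conic in $\Pi$; a conic $q|_\Pi$ through these six points of the irreducible conic $B$ must, for degree reasons, either be $B$ itself (up to scalar) or be a reducible conic whose components are forced by the configuration. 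The cleanest route is to study the restriction map $H^0\bigl(\PP^5,\mathcal{I}_{\Gamma/\PP^5}(2)\bigr)\rightarrow H^0\bigl(\Pi,\OO_\Pi(2)\bigr)\cong H^0\bigl(\PP^2,\OO_{\PP^2}(2)\bigr)$, whose kernel is precisely $V$. I would argue that the image of this map is one-dimensional, spanned by the conic $B$: every quadric through $\Gamma$ cuts $\Pi$ in a conic passing through the $6$ general points $B\cdot\Gamma$, and since $6$ general points already impose independent conditions on the $6$-dimensional space of conics in $\PP^2$ (indeed $h^0(\PP^2,\mathcal{I}_{6\,\mathrm{pts}}(2))$ is at most $0$ for general points, but here the points lie on $B$, so the only conic through them is $B$ up to scalar). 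Hence the image is at most one-dimensional, and it is exactly one-dimensional because $T$ itself lies on quadrics meeting $\Pi$ along $B$; therefore $\dim V=4-1=3$.

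The main obstacle I anticipate is justifying that the restriction map has image of dimension exactly $1$ rather than $0$. If it were $0$, then \emph{all four} quadrics through $\Gamma$ would already contain $\Pi$, and I would need to rule this out, while if some quadric restricted to a conic different from (a multiple of) $B$, the image could a priori jump. Both extremes are controlled by the geometry inherited from $T$: by Proposition \ref{prop:T} the surface $T$ lies on exactly three quadrics whose common base locus is $T\cup\Pi$, so those three quadrics contain $\Pi$ and contribute to $V$, giving $\dim V\geq 3$; meanwhile the full space of quadrics through the quadratic section $\Gamma\subseteq T$ is four-dimensional by projective normality (Corollary \ref{cor:projnorm}), and the extra quadric cutting out $\Gamma$ on $T$ restricts to $B$ on $\Pi$, so it is \emph{not} in $V$. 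This pins down $\dim V=3$ exactly. I would therefore organize the proof as: (i) recall $\dim H^0(\mathcal{I}_{\Gamma/\PP^5}(2))=4$ from projective normality; (ii) produce the three quadrics through $T$ (hence through $\Gamma$) that contain $\Pi$, via Proposition \ref{prop:T}, giving $\dim V\geq 3$; and (iii) exhibit a quadric through $\Gamma$ not containing $\Pi$ — namely the defining quadric of $\Gamma$ inside $T$, which restricts to $B$ on $\Pi$ — to conclude $\dim V=3$.
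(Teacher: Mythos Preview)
Your proposal is correct, and the central mechanism---restricting quadrics through $\Gamma$ to the plane $\Pi$ and observing that the image is at most the line spanned by the equation of $B$---is exactly the paper's idea. The paper compresses it into two lines: since every $q\in H^0\bigl(\PP^5,\mathcal{I}_{\Gamma/\PP^5}(2)\bigr)$ already contains $B$ (this is the definition of $B$ via the base locus), one fixes a single point $\mathpzc{r}\in\Pi\setminus B$ and notes that $\Pi\subseteq q$ if and only if $\mathpzc{r}\in q$, because otherwise $q_{|\Pi}$ would be a conic containing $B\cup\{\mathpzc{r}\}$; passing through one point is one linear condition, giving $\dim V\geq 3$. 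Your detour through the six points $B\cdot\Gamma$ and the unproved identification $\Gamma\cap\Pi=B\cdot\Gamma$ is unnecessary (that equality is only established in the \emph{next} proposition of the paper); all you need is that $q$ vanishes on $B\subseteq\Pi$, which you already have from the base-locus description. Your alternative route (ii)--(iii) via the surface $T$ from Proposition~\ref{prop:T} is also valid and has the advantage of pinning down $\dim V=3$ exactly, but it imports more than the paper uses here: in the paper's logical order $T$ is \emph{defined} only after $V$ is in hand, via $\mathrm{Bs}\,|V|=\Pi+T$, so invoking $T$ at this stage leans on the $\vartheta$-parametrization more heavily than necessary.
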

\begin{proof} We pick a general point $\mathpzc{r} \in \Pi\setminus B$. Then for a quadric $Z\in  H^0\bigl(\PP^5, \mathcal{I}_{\Gamma/\PP^5}(2)\bigr)$ one has that  $\Pi\subseteq Z$ if and only if $\mathpzc{r} \in Z$. Indeed, if $\mathpzc{r} \in Z$, then the restriction of $Z$ to $\Pi$ already contains $B\cup \{\mathpzc{r}\}$, therefore $\Pi\subseteq Z$. Since containing the fixed point $\mathpzc{r}$ imposes one condition on $|\mathcal{I}_{\Gamma/\PP^5}(2)|$, the conclusion follows.
\end{proof}

We now introduce the surface $T\subseteq \PP^5$, defined as the residual surface to $\Pi$ in the complete intersection
(\ref{defi:T}), that is,
$$\mathrm{Bs }\ |V|=\Pi+T.$$

Thus $T$ is a degree $7$ surface in $\PP^5$ lying on three quadrics whose intersection contains a $2$-plane. Such surfaces are classified in \cite{Io} and there are five possible families. But the geometric situation at hand helps us show that $T$ is the surface described in Proposition \ref{prop:T}.  Since $\Gamma$ is nondegenerate in $\PP^5$, in particular $\Gamma\nsubseteq \Pi$, hence  $\Gamma\subseteq T$.  It follows that $\Gamma$ is the intersection of $T$ with one of the quadrics from $H^0\bigl(\PP^5, \mathcal{I}_{\Gamma/\PP^5}(2)\bigr)\setminus V$. Since the intersection $\Gamma\cap B$ is transverse, one has $n_i\notin B$, and hence $n_i\in \PP^5\setminus \Pi$. We set $n_i':=p(n_i)\in \PP^2$ for $i=1, \ldots, 8$, where
\begin{equation}\label{def:proj}
p=p_{\Pi}\colon \PP^5\dashrightarrow \PP^2
\end{equation}
is the projection with center the $2$-plane $\Pi$.

\begin{proposition}\label{hyp3}
The image curve $\Gamma':=p(\Gamma)\subseteq \PP^2$ is a nodal plane curve of genus $8$ with nodes at $n_1', \ldots, n_8'$, as well as at  further  $9$ unspecified points.
\end{proposition}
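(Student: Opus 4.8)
The plan is to identify the projection $p=p_{\Pi}$, restricted to the surface $T$, with the contraction of the nine exceptional curves of $T\cong \mathrm{Bl}_9(\PP^2)$. Recall from Proposition \ref{prop:T} that $T$ is embedded by $|4h-E_1-\cdots-E_9|$ and that $T\cap \Pi=J$ is a smooth elliptic curve with $J\in |3h-E_1-\cdots-E_9|$. First I would note that the three-dimensional space of linear forms on $\PP^5$ vanishing on the plane $\Pi$ restricts injectively into $H^0(T,\OO_T(1))$ (no nonzero form can vanish on the nondegenerate surface $T$), with image landing in the sections vanishing on $J=T\cap \Pi$; dividing by the section cutting out $J$ identifies this image with $H^0\bigl(T,\OO_T(4h-E_1-\cdots-E_9-J)\bigr)=H^0(T,\OO_T(h))$, and a dimension count shows it is all of this $3$-dimensional space. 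Consequently, after discarding the fixed component $J$, the map $p_{\Pi}|_T$ is the morphism attached to the complete linear system $|h|$, that is, the blow-down $c\colon T=\mathrm{Bl}_9(\PP^2)\rightarrow \PP^2$ contracting $E_1,\dots,E_9$ to the points $\mathpzc{o}_1,\dots,\mathpzc{o}_9$.

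Granting this, since $\Gamma$ is cut out on $T$ by a quadric one has $\Gamma\in|\OO_T(2)|=|8h-2E_1-\cdots-2E_9|$, whence $\Gamma'=c(\Gamma)$ is a plane curve of degree $\Gamma\cdot h=8$ passing doubly through each $\mathpzc{o}_i$. I would then read off the two groups of singular points separately. From $\Gamma\cdot E_i=2$ the curve $\Gamma$ meets each contracted curve $E_i$ in two points, which $c$ glues into the single point $\mathpzc{o}_i\in\Gamma'$; these account for the nine further, a priori unspecified, singular points. Away from $E_1\cup\cdots\cup E_9$ the morphism $c$ is an isomorphism onto its image; since $n_i\notin \Pi$ and the two groups of singular points are disjoint, the nodes $n_1,\dots,n_8$ of $\Gamma$ lie off $E_1\cup\cdots\cup E_9$ and are carried isomorphically to the points $n_i'=p(n_i)$, which are therefore singular on $\Gamma'$.

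It remains to verify that these $8+9=17$ points are ordinary nodes and that $\Gamma'$ has no further singularity; I would do this by a genus count combined with semicontinuity. As $c|_{\Gamma}\colon \Gamma\to\Gamma'$ is birational, the normalization of $\Gamma'$ is the smooth hyperelliptic curve $C$ of genus $4$, while a plane octic has arithmetic genus ${7\choose 2}=21$; hence the total $\delta$-invariant of $\Gamma'$ equals $21-4=17$. Since $17$ singular points have already been exhibited, each of them must be an ordinary node and there can be no others. The transversality of the two branches at every $\mathpzc{o}_i$ and at every $n_i'$, which is what distinguishes a genuine node from a cusp or a tacnode, is the only delicate point and is, I expect, the main obstacle. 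I would dispose of it by openness: it suffices to produce one member for which $\Gamma'$ is visibly $17$-nodal, and such a member is furnished either by Claim (i) in the proof of Theorem \ref{thm:domin}, where the octic $\pi_{\ell}(C)$ is shown to have exactly ${7\choose 2}-4=17$ nodes, or by the reducible degeneration $\Gamma=\Gamma_1+\Gamma_2$ of Proposition \ref{well-def}, whose nodality can be checked explicitly. As being $17$-nodal is an open condition and $[\Gamma,L]$ is general in $\mathfrak{Hyp}_{4,8}$, the assertion follows for the general member.
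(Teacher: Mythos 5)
Your surface-level route --- identifying $p_{\Pi}|_T$ with the blow-down $c\colon \mathrm{Bl}_9(\PP^2)\rightarrow \PP^2$ by splitting the fixed component $J$ off the linear forms vanishing on $\Pi$ --- is genuinely different from the paper's proof, which never uses the structure of $T$ at all: the paper works directly on the curve, showing $\Gamma\cap \Pi=\Gamma\cdot B$ and that $p|_{\Gamma}$ is given by $\bigl|\OO_{\Gamma}(1)(-\Gamma\cdot B)\bigr|\cong |\omega_{\Gamma}(-1)|\in W^2_8(\Gamma)$ via (\ref{gammaem}), so that the image is an octic with normalization $C$. Your first step is correct as a computation, but it takes as input that $T$ is the surface of Proposition \ref{prop:T}. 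That identification is asserted by the paper just before the statement, yet its actual justification is the closing remark of the paper's own proof of Proposition \ref{hyp3} (one recovers $\Gamma$ by blowing up the nine residual nodes of $\Gamma'$ and embedding by $|4h-E_1-\cdots-E_9|$) --- which is precisely why the paper's proof avoids $T$. As it stands your argument is therefore circular, unless you first establish $T\cong \mathrm{Bl}_9(\PP^2)$ for the general member of $\P$ independently, e.g.\ by specializing to the member of Proposition \ref{well-def}, where it holds by construction, and arguing openness in the resulting family of surfaces over $\P$.

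The second genuine problem is the keystone nodality step. Being $17$-nodal is \emph{not} an open condition (nodes smooth out in families); what is open is having only ordinary nodes. Moreover, neither special member you invoke does what you claim. The reducible member of Proposition \ref{well-def} has plane image $\Gamma_1'+\Gamma_2'$, two $3$-nodal quartics meeting transversally in $16$ points: this is nodal but has $22$ nodes, not $17$, so it is not ``visibly $17$-nodal''. The members from Claim (i) of Theorem \ref{thm:domin} lie in $\mathrm{Im}(\vartheta)$, and at this stage it is not known that the general member of the family over $\P$ lands there --- that is Theorem \ref{unir}, proved later using this very proposition, so invoking them is again circular. The repair is the version the paper tacitly relies on: nodality of the projected curve is open over $\P$, it holds at the Proposition \ref{well-def} member, hence it holds generically; then your $\delta$-count $21-4=17$ pins down the number of nodes and their distribution into the two groups. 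This corrected argument also fixes your non sequitur ``$n_i\notin \Pi$, hence $n_i\notin E_1\cup\cdots\cup E_9$'': each $E_j$ meets $\Pi$ in a point (since $E_j\cdot J=1$) without being contained in it, so the implication fails as stated; instead, a node of $\Gamma$ sitting on some $E_j$ would blow down to a tacnode or worse, which generic nodality excludes a posteriori.
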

\begin{proof}
Set $\Gamma\cdot B=\mathpzc{r}_1+\cdots+\mathpzc{r}_6$ viewed as a divisor of degree $6$ on $\Gamma$. Since for any quadric $q\in H^0\bigl(\PP^5, \mathcal{I}_{\Gamma/\PP^5}(2)\bigr)\setminus V$  one  has  $q\cdot \Pi=Q$, it follows  that $\Gamma \cap \Pi=\{\mathpzc{r}_1, \ldots, \mathpzc{r}_6\}$.

The restriction $p_{|\Gamma}\colon \Gamma\dashrightarrow \PP^2$ of the projection map $p$ defined by (\ref{def:proj}) has thus the divisor $\mathpzc{r}_1+\cdots+\mathpzc{r}_6$ as its base locus. Removing this base locus we obtain a regular map $p\colon \Gamma\rightarrow \Gamma'\subseteq \PP^2$ given by the linear system  $$\bigl|\OO_{\Gamma}(1)(-\mathpzc{r}_1-\cdots-\mathpzc{r}_6))\bigr| \cong| \omega_{\Gamma}(-1) |\in W^2_8(\Gamma).$$ Thus $\Gamma'$ is a plane octic curve. Since its normalization is the genus $4$ curve $C$, it has $17$ nodes, which fall into two groups, namely the $8$ nodes $\{n_1', \ldots, n_8'\}$ and the rest, which we denote by $\{\mathpzc{o}_1, \ldots, \mathpzc{o}_9\}$.  We remark that $\Gamma\subseteq \PP^5$ can be recovered from such an octic curve $\Gamma'$.  Indeed, keeping the notation above, we blow-up $\PP^2$ at the nine points $\mathpzc{o}_1, \ldots, \mathpzc{o}_9\in \PP^2$, then consider the regular map
$\varphi:=\phi_{|4h-E_1-\cdots-E_9|}\colon \mathrm{Bl}_{9}(\PP^2)\rightarrow \PP^5.$ Using Proposition \ref{prop:T}, we may assume $\varphi$ is indeed an embedding.
The image of the strict transform of $\Gamma'$ under $\varphi$ is precisely the $8$-nodal curve $\Gamma$.
\end{proof}

\vskip 3pt

\begin{remark}
The conic $B$ defined in (\ref{YB}) is the intersection of the quadric $q$ with the $2$-plane $\Pi$, whereas the cycle $\Gamma\cdot B$ of length $6$ is precisely the intersection cycle $\Gamma\cdot J$ on the smooth surface $T$, where $J\in|3h-E_1-\cdots-E_9|$.
\end{remark}

We can now finish the proof of Theorem \ref{isom} and \ref{unir}.

\vskip 4pt

\noindent \emph{Proof of Theorem \ref{isom}.} It suffices to observe  that the map $\chi$ is generically injective. Indeed, for the $8$-nodal curve $\Gamma\subseteq \PP^5$ given by (\ref{def:gamma}) and having a hyperelliptic normalization $\pi\colon C\rightarrow \Gamma$ of genus $4$, we recover the scroll as the union of the lines $\bigl\langle \pi(x), \pi(\iota(x))\bigr\rangle\subseteq \PP^5$ as $x\in C$ varies.
Here $\iota$ denotes the hyperelliptic involution of $C$. But then the quadric $Q\subseteq \PP^5$ such that $\Gamma\subseteq R\cdot Q$ is also determined, which also leads to the unordered collection $\ell_1+\ell_2+\ell_3+\ell_4$ of rulings of $R$ such that the relation (\ref{def:gamma}) holds.
\hfill $\Box$


\begin{thebibliography}{aaaaaa}
\bibitem[BM]{BM} A. Bayer and E. Macr\`\i, {\em{MMP for moduli of sheaves on $K3$s via wall-crossing: nef and movable cones, Lagrangian fibrations}},
Inventiones Mathematicae \textbf{198} (2014), 505--590.
\bibitem[BD]{BD} A. Beauville and R. Donagi, {\em{La vari\'et\'e des droites d'une hypersurface cubique de dimension $4$}}, C.R. Acad. Sci. Paris Ser. I  Math.  \textbf{301} (1985), 703--706.
\bibitem[BLMM]{BLMM} N. Bergeron, Z. Li, J. Milsson and C. Moeglin, \emph{The Noether-Lefschetz conjecture and generalizations}, Inventiones Mathematicae \textbf{208} (2017), 501--552.
\bibitem[BR]{BR} M. Bolognesi and F. Russo, {\em{Some loci of rational cubic fourfolds}}, Math. Annalen \textbf{373} (2019), 165--190.
\bibitem[Co]{Co} M. Coppens, {\em Embeddings of general blowings-ups at points}, J. reine angew. Mathematik \textbf{469} (1995), 179--198.
\bibitem[DM]{DM} O. Debarre and E. Macr\`\i, {\em{On the period map for polarized hyperk\"ahler fourfolds}}, arXiv:1704.01439,
International Math. Research Notices, to appear.
\bibitem[Fa]{Fa} G. Fano, {\em{Sulle forme cubiche dello spazio a cinque dimensioni contenenti rigate razionali del $4^{\circ}$ ordine}}, Commentarii Math. Helvetici \textbf{15} (1943), 71--80.
\bibitem[F]{F} G. Farkas, {\em{Koszul divisors on moduli spaces of curves}}, American Journal of Math. \textbf{131} (2009), 819--867.
\bibitem[FV]{FV} G. Farkas and A. Verra, {\em{The universal $K3$ surface of genus $14$ via cubic fourfolds}}, J. de Math\'ematiques Pures et Appliqu\'ees  \textbf{111} (2018), 1--20.
\bibitem[Ful]{Ful} W. Fulton, {\em{Intersection theory}}, Ergebnisse der Mathematik und ihrer Grenzgebiete \textbf{2}, Springer-Verlag, Berlin 2nd edition 1998.
\bibitem[GHS]{GHS} V. Gritsenko, K. Hulek and G.K. Sankaran, {\em{The Kodaira dimension of the moduli space of $K3$ surfaces}}, Inventiones Mathematicae \textbf{169} (2007), 519--567.
\bibitem[Ha]{H} B. Hassett, {\em{Special cubic fourfolds}},  Compositio Mathematica \textbf{120} (2000) 1--23.
\bibitem[HT1]{HT1} B. Hassett and Y. Tschinkel, {\em{Intersection numbers of extremal rays on holomorphic symplectic varieties}}, Asian Journal of  Mathematics \textbf{14} (2010), 303--322.
\bibitem[HT2]{HT} B. Hassett and Y. Tschinkel, {\em{Rational curves on holomorphic symplectic fourfolds}}, Geometry and Functional Analysis \textbf{11} (2001), 1201--1228.
\bibitem[Io]{Io} P. Ionescu, {\em{Embedded projective varieties with small invariants}}, in: Algebraic geometry (Bucharest 1982),  Lecture Notes in Mathematics \textbf{1056}.
\bibitem[Kuz]{Kuz} A. Kuznetsov, {\em{Derived  categories  of  cubic  fourfolds}}, arXiv:0808.3351, in: Cohomological  and  geometric  approaches  to  rationality  problems,  Progress in Math. \textbf{282} 219--243, Birkh\"auser 2010.
\bibitem[Lai]{L} K.-W. Lai, {\em{New cubic fourfolds with odd degree unirational parametrizations}}, Algebra \& Number Theory \textbf{11} (2017), 1597--1626.
\bibitem[Log]{Log} A. Logan, {\em{The Kodaira dimension of moduli spaces of curves with marked points}}, American Journal of Math. \textbf{125} (2003), 105--138.
\bibitem[Ma]{Ma} S. Ma, {\em{Mukai models and Borcherds products}},  arXiv:1909.03946.
\bibitem[M1]{M1} S. Mukai, {\em{Curves, $K3$ surfaces and Fano $3$-folds of genus $\leq 10$}}, in: Algebraic Geometry and Commutative Algebra in Honor of M. Nagata,
357-377, Kinokuniya, Tokyo, 1988.
\bibitem[M2]{M2} S. Mukai, {\em{Curves and symmetric spaces II}}, Annals of Mathematics \textbf{172} (2010), 1539--1558.
\bibitem[M3]{M3} S. Mukai, {\em{Polarized $K3$ surfaces of genus $18$ and $20$}}, in: Complex Projective Geometry, London Math. Soc. Lecture Notes Series \textbf{179}, 264-276, Cambridge University Press 1992.
\bibitem[M4]{M4} S. Mukai, {\em{Polarised $K3$ surfaces of genus $13$}}, in: Moduli Spaces and Arithmetic Geometry (Kyoto 2004), Advanced Studies in Pure Mathematics \textbf{45}, 315-326, 2006.
\bibitem[M5]{M5} S. Mukai, {\em{$K3$ surfaces of genus sixteen}}, in: Minimal models and extremal rays (Kyoto 2011), Advanced Studies in Pure Mathematics \textbf{70}, 379--396, 2016.
\bibitem[M6]{M6} S. Mukai, {\em{Curves and Grassmannians}}, in: Algebraic Geometry and Related Topics (1992), eds. J.-H. Yang, Y. Namikawa, K. Ueno, 19-40.
\bibitem[M7]{M7} S. Mukai, {\em{New developments in the theory of Fano threefolds: vector bundle method and moduli problems}},  translation of Sugaku \textbf{47} (1995), 125--144.
\bibitem[M8]{M8} S. Mukai, {\em{Curves and $K3$ surfaces of genus eleven}}, in: Moduli of vector bundles, Lecture Notes in Pure and Applied Mathematics Vol. 179, Dekker (1996), 189--197.
\bibitem[Mu]{Mu} D. Mumford, {\em{Varieties defined by quadratic equations}}, in: Questions on Algebraic Varieties, C.I.M.E. Summer Schools Vol. 51, 1969, 31--100.
\bibitem[Nu]{Nu} H. Nuer, {\em{Unirationality of moduli spaces of special cubic fourfolds and $K3$ surfaces}},  Algebraic Geometry \textbf{4} (2015), 282--289.
\bibitem[RS1]{RS} F. Russo and G. Staglian\`o, {\em{Congruence of $5$-secant conics and the rationality of some admissible cubic fourfolds}}, Duke Math. Journal \textbf{168} (2019), 849--865.
\bibitem[RS2]{RS2} F. Russo and G. Staglian\`o, {\em{Trisecant flops, their associated $K3$ surfaces and the rationality of some Fano fourfolds}}, preprint.
\bibitem[Voi]{V} C. Voisin, {\em{Th\'eor\`eme de Torelli pour les cubiques de $\PP^5$}}, Inventiones Mathematicae \textbf{86} (1986), 577--601.
\end{thebibliography}
\end{document}